\newcommand{\R}{{\mathbb R}}
\newcommand{\N}{{\mathbb N}} 
\newcommand{\C}{{\mathbb C}}
\newcommand{\eps }{\varepsilon}
 \renewcommand{\geq }{\geqslant}
 \renewcommand{\leq }{\leqslant}
\DeclarePairedDelimiter{\abs}{\lvert}{\rvert}
\DeclarePairedDelimiter{\norma}{\lVert}{\rVert} 
\newcommand{\Rn}{{\mathbb R^n}}
\newenvironment{sistema}%
{\left\lbrace\begin{array}{@{}l@{}}}%
{\end{array}\right.}  
\numberwithin{equation}{section}
\newtheorem{theorem}{Theorem}[section]
\newtheorem{lemma}[theorem]{Lemma}
\theoremstyle{definition}
\newtheorem{remark}[theorem]{Remark}
\begin{document} 

\title[Sharp Hardy uncertainty principle in magnetic fields]{Sharp Hardy uncertainty principle and gaussian profiles of covariant Schr\"odinger evolutions}


\author{B.~Cassano}
\address{Biagio Cassano: SAPIENZA Universit$\grave{\text{a}}$ di Roma, Dipartimento di Matematica, P.le A. Moro 5, 00185-Roma, Italy}
\email{cassano@mat.uniroma1.it}

\author{L.~Fanelli}
\address{Luca Fanelli: SAPIENZA Universit$\grave{\text{a}}$ di Roma, Dipartimento di Matematica, P.le A. Moro 5, 00185-Roma, Italy}
\email{fanelli@mat.uniroma1.it}

\subjclass[2000]{35J10, 35L05.}
\keywords{Schr\"odinger equation, electromagnetic potentials, unique continuation, uncertainty principle}

\thanks{
The two authors were supported by the Italian project FIRB 2012 {\it Dispersive Dynamics: Fourier Analysis and Calculus of Variations}.}

\begin{abstract}
We prove a sharp version of the Hardy uncertainty principle for Schr\"odinger equations with external bounded electromagnetic potentials, based on logarithmic convexity properties of Schr\"odinger evolutions. We provide, in addition, an example of a real electromagnetic potential which produces the existence of solutions with critical gaussian decay, at two distinct times.
\end{abstract}

\date{\today}
\maketitle

\section{Introduction}\label{sec.Introduction}
This paper is concerned with the sharpest possible gaussian decay, at two distinct times, of solutions to Schr\"odinger equations of
the type
\begin{equation}\label{eq:main}
  \partial_t u = i (\Delta_A + V)u,
\end{equation}
where $u=u(x,t):\R^n \times  [0,1]\to\C$, and
\begin{align*} 
   &V=V(x,t)\colon \Rn\times [0,1] \to \C, \\ 
   &\Delta_A:= \nabla_A^2,\quad
  \nabla_A:=\nabla -i A,\quad A=A(x)\colon \Rn\to\Rn.
\end{align*}
We follow a program which has been developed in the magnetic free case $A\equiv 0$, in the recent years,
by Escauriaza, Kenig, Ponce, and Vega
in the sequel of papers \cite{EKPV0,EKPV1,EKPV2,EKPV3,EKPV4},
and  with Cowling in \cite{CEKPV}.
One of the main motivations is the connection with the \textit{Hardy uncertainty principle}, which can be stated as follows:

{\it if $f(x)=O\left(e^{-|x|^2/\beta^2}\right)$ and its Fourier transform
$\hat f(\xi)=O\left(e^{-4|\xi|^2/\alpha^2}\right)$, then}
\begin{align*}
  \alpha\beta<4
  &
  \Rightarrow
  f\equiv0
  \\
  \alpha\beta=4
  &
  \Rightarrow
  f\ is\ a \ constant\ multiple\ of\ e^{-\frac{|x|^2}{\beta^2}}.
\end{align*}
The solving formula for solutions to the free Schr\"odinger equation with initial datum $f$ in $L^2$, namely
\begin{equation*}
  u(x,t):=e^{it\Delta}f(x) = (2\pi it)^{-\frac{n}{2}}e^{i\frac{|x|^2}{4t}}
  \mathcal F\left(e^{i\frac{|\cdot|^2}{4t}}f\right)\left(\frac{x}{2t}\right),
\end{equation*}
gives a hint of the following PDE's-version of the Hardy uncertainty principle:

{\it if $u(x,0)=O\left(e^{-|x|^2/\beta^2}\right)$ and $u(x,T):=e^{iT\Delta}u(x,0)=
O\left(e^{-|x|^2/\alpha^2}\right)$, then}
\begin{align*}
  \alpha\beta<4T
  &
  \Rightarrow
  u\equiv0
  \\
  \alpha\beta=4T
  &
  \Rightarrow
  u(x,0)\ is\ a \ constant\ multiple\ of\ e^{-\left(\frac{1}{\beta^2}+\frac{i}{4T}\right)|x|^2}.
\end{align*}
The corresponding $L^2$-versions of the previous results were proved in \cite{SST} and affirm the following:
\begin{align*}
  e^{|x|^2/\beta^2}f \in L^2,\  e^{4|\xi|^2/\alpha^2}\hat f \in L^2,\
  \alpha\beta\leq4
  &
  \Rightarrow
  f\equiv0
  \\
  e^{|x|^2/\beta^2}u(x,0)\in L^2,\  e^{|x|^2/\alpha^2}e^{iT\Delta}u(x,0) \in L^2,\
  \alpha\beta\leq4T
  &
  \Rightarrow
  u\equiv0.
\end{align*}
We mention \cite{BD, FS, SS} as interesting surveys about this topic.
In the sequel of papers \cite{CEKPV,EKPV0,EKPV1,EKPV2,EKPV3,EKPV4}, the authors investigated the validity of the previous statements for zero-order perturbations of the Schr\"odinger equation of the form 
\begin{equation}\label{eq:main2}
  \partial_t u = i (\Delta + V(t,x))u.
\end{equation}
An interesting contribution of the above papers is that a purely real analytical proof of the uncertainty principle is provided, based on the logarithmic convexity properties of weighted $L^2$-norms of solutions to \eqref{eq:main2}.
Namely, norms of the type $H(t):=\norma{e^{a(t)\abs{x+b(t)}^2}u(t)}_{L^2(\Rn)}$, where $a(t)$ is a suitable bounded function, and $b(t)$ is a curve in $\R^n$
 are logarithmically convex in time.
The interest of these results relies on various motivations. First, since just real analytical techniques are involved, rough potentials $V\in L^\infty$ can be considered, which are usually difficult to handle by Fourier techniques.
In addition, in \cite{EKPV3} it is shown that a gaussian decay at times $0$ and $T$ of solutions to \eqref{eq:main2}  is not only preserved, but also improved, in some sense, for intermediate times, up to suitably move the center of the gaussian.
A consequence of Theorem 1 in \cite{EKPV3} is the following: if $V(t,x)\in L^\infty$ is the sum of a real-valued potential $V_1$ and a sufficiently regular complex-valued potential $V_2$, and $\norma{e^{\abs{x}^2/\beta^2}u(0)}_{L^2}
+\norma{e^{\abs{x}^2/\alpha^2}u(T)}_{L^2}<+\infty$, with $\alpha\beta<4T$, then $u\equiv0$. Moreover, the result is sharp in the class of complex potentials: indeed, Theorem 2 in \cite{EKPV3} provides an example of a (complex) potential $V$ for which there exists a non-trivial solution $u\neq0$ with the above gaussian decay properties, with $\alpha\beta=4T$.

The fact that the potential in \cite{EKPV3} is complex-valued might have an appealing connection with the examples by Cruz-Sampedro and Meshkov in \cite{C, M} about unique continuation at infinity for stationary Schr\"odinger equations. 
In particular, an interesting question is still open, concerning with the possibility or not of providing analogous real-valued examples. 

Our first result states the following: if one introduces a magnetic potential in the hamiltonian, then real-valued examples in the spirit of Theorem 2 in \cite{EKPV3} can be found.
\begin{theorem}\label{thm:example}
  Let $n=3$, $k>3/2$, and define $A=A(x,y,z,t):\R^{3+1}\to\R^3$ and $V=V(x,y,z,t):\R^{3+1}\to\R^3$ as follows:
  \begin{align}
  A(x,y,z,t) 
  &
  =
  \frac{2kt}{1+t^2}\cdot\frac{z}{(x^2+y^2)(1+r^2)}\left(xz,yz,-x^2-y^2\right),
  \label{eq:exA}
  \\
  V(x,y,z,t)
  &
  =
  \frac{1}{1+r^2}\left(\frac{2k}{1+t^2}+6k-\frac{4k(1+k)r^2}{1+r^2}
  -\left|A(x,y,z,t)\right|^2\right),
  \label{eq:exV}
  \end{align}
  where $r^2:=x^2+y^2+z^2$.
  Then the function
  \begin{equation}\label{eq:exu}
    u=u(r,,t)=(1+it)^{2k-\frac n2}(1+r^2)^{-k}e^{-\frac{(1-it)}{4(1+t^2)}r^2}
  \end{equation}
  is a solution to
  \begin{equation*}
    i\partial_tu+\Delta_Au = Vu
  \end{equation*}
  satisfying $\left\|e^{\frac{r^2}8}u(-1)\right\|_{L^2}+\left\|e^{\frac{r^2}8}u(1)\right\|_{L^2}<\infty$.
\end{theorem}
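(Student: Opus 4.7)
The proof is a direct verification: $A$, $V$, and $u$ are all explicit, so I would simply plug them in and check the equation, then bound the Gaussian norm at $t=\pm 1$. Two geometric observations organize the computation. First, $u=u(r,t)$ is radial in space. Second, $A$ is tangential: writing $\rho^2=x^2+y^2$, a one-line check gives
\[
A\cdot(x,y,z) = \frac{2kt z}{(1+t^2)\rho^2(1+r^2)}\bigl(x^2z+y^2z-\rho^2 z\bigr) \equiv 0,
\]
so $A\perp\hat{r}$, and consequently $A\cdot\nabla u\equiv 0$ for the radial $u$. Hence the magnetic Laplacian reduces to
\[
\Delta_A u = \Delta u - i(\dive A)u - |A|^2 u.
\]

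I would then compute the three ingredients $\dive A$, $|A|^2$, and $(i\partial_t+\Delta)u/u$. For $\dive A$ the cleanest route is spherical coordinates $(r,\theta,\phi)$: putting $z=r\cos\theta$ and $\rho=r\sin\theta$ gives $A=A_\theta\hat\theta$ with $A_\theta=\frac{2ktr\cos\theta}{(1+t^2)(1+r^2)\sin\theta}$, whence
\[
\dive A = \frac{1}{r\sin\theta}\partial_\theta(\sin\theta\,A_\theta) = -\frac{2kt}{(1+t^2)(1+r^2)},
\]
a purely radial quantity, while $|A|^2=A_\theta^2$ is read off directly. Using $\frac{1-it}{1+t^2}=\frac{1}{1+it}$, I would write
\[
\log u = \bigl(2k-\tfrac{n}{2}\bigr)\log(1+it) - k\log(1+r^2) - \frac{r^2}{4(1+it)},
\]
and compute $i\partial_t u/u$ by differentiation in $t$ and $\Delta u/u = \Delta\log u + (\nabla\log u)^2$ (with $n=3$) from the spatial derivatives. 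The $r^2/[4(1+it)^2]$ contributions cancel in the sum, leaving the compact identity
\[
\frac{i\partial_t u + \Delta u}{u} = \frac{-6k+(4k^2-2k)r^2}{(1+r^2)^2} - \frac{2k}{(1+it)(1+r^2)}.
\]

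The decisive step is the assembly. Splitting $\frac{1}{1+it}=\frac{1-it}{1+t^2}$ into real and imaginary parts and adding $-i\dive A - |A|^2$ to pass from $\Delta$ to $\Delta_A$, the two imaginary contributions cancel---this is the ``conspiracy'' produced by the specific form \eqref{eq:exA} of $A$---and the remaining real part rearranges, via $\frac{-6k+(4k^2-2k)r^2}{(1+r^2)^2} = -\frac{6k}{1+r^2} + \frac{4k(k+1)r^2}{(1+r^2)^2}$, into precisely the expression \eqref{eq:exV} for $V$. For the decay estimate, the explicit formula gives $|u(x,\pm 1)| = 2^{k-3/4}(1+r^2)^{-k} e^{-r^2/8}$, so
\[
\|e^{r^2/8} u(\pm 1)\|_{L^2(\R^3)}^2 = 2^{2k-3/2}\int_{\R^3}(1+|x|^2)^{-2k}\,dx,
\]
which is finite whenever $2k > 3/2$, in particular under the standing hypothesis $k>3/2$. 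The main technical obstacle is the bookkeeping of real and imaginary parts in the assembly step; the engineered cancellation of the imaginary contributions is precisely what dictates the form \eqref{eq:exA} of the magnetic potential, while everything else is elementary calculation.
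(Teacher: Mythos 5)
Your structure mirrors the paper exactly: exploit $A\cdot x\equiv0$ and $u$ radial so $A\cdot\nabla u\equiv 0$, reduce $\Delta_A u$ to $\Delta u - i(\dive A)u-|A|^2u$, compute $\dive A$ and $(i\partial_t+\Delta)u/u$, then assemble. Your formula
\[
\frac{(i\partial_t+\Delta)u}{u}=\frac{-6k+(4k^2-2k)r^2}{(1+r^2)^2}-\frac{2k}{(1+it)(1+r^2)}
\]
is correct (a quick sanity check at $(r,t)=(0,0)$: $i\partial_t u=-(2k-\tfrac32)$, $\Delta u=3\cdot(-2k-\tfrac12)$, sum $=-8k$, matching your formula). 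Note, however, that this is the \emph{negative} of the expression recorded in the paper's Section~2, where the sign is flipped.

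That sign is decisive, and it is exactly where your proof breaks down. You assert that ``the two imaginary contributions cancel,'' but with your (correct) formula they do not. You correctly computed $\dive A=-\frac{2kt}{(1+t^2)(1+r^2)}$, so
\[
-i\dive A=\frac{2ikt}{(1+t^2)(1+r^2)},\qquad \Im\bigl(-i\dive A\bigr)=+\frac{2kt}{(1+t^2)(1+r^2)}.
\]
Meanwhile
\[
-\frac{2k}{(1+it)(1+r^2)}=\frac{-2k+2ikt}{(1+t^2)(1+r^2)},\qquad
\Im\!\left(-\frac{2k}{(1+it)(1+r^2)}\right)=+\frac{2kt}{(1+t^2)(1+r^2)}.
\]
These have the \emph{same} sign: they add to $\frac{4kt}{(1+t^2)(1+r^2)}\neq 0$ rather than cancel. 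Consequently, with $A$ and $u$ exactly as written in \eqref{eq:exA} and \eqref{eq:exu}, the quantity $(i\partial_t+\Delta_A)u/u$ is not real, and no real $V$ closes the equation; the ``conspiracy'' you invoke would require the opposite sign on $A$ (equivalently, on the prefactor $\frac{2kt}{1+t^2}$), which gives $\dive A=+\frac{2kt}{(1+t^2)(1+r^2)}$ and hence the compensating imaginary part. The paper's own computation avoids this only because its displayed formula for $(i\partial_t+\Delta)u$ carries the wrong global sign, so its assembly goes through formally. You computed the right intermediate formula but then stated the paper's conclusion without carrying the assembly through; doing so would have exposed the inconsistency. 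Everything else in your argument---tangentiality $A\cdot x\equiv 0$, the spherical-coordinate divergence, and the $L^2$ decay bound at $t=\pm1$ (finite for $k>3/4$, a fortiori for $k>3/2$)---is correct.
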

\begin{remark} \label{rem:scaling} 
  The choice of the time interval $[-1,1]$ instead of $[0,T]$ does
  not led the generality of the result, since by scaling one can always reduce matters to this case (see also Remark \ref{rem:scaling2} below).
  Notice that both $A$ and $V$ are real-valued, and this is (at our knowledge) a novelty.
  Observe moreover that $A$ is time-dependent, and singular all over the $z$-axis $x=y=0$, with Coulomb-type singularity $(x^2+y^2)^{-\frac12}$.
  We finally remark that we are not able to generalize the above example to any dimension $n\neq3$, and it is unclear to us if this is an intrinsic obstruction or not.
  The main idea relies in the expansion
  \begin{equation*}
    \Delta_A = \Delta -2iA\cdot\nabla-i\text{div}\,A-|A|^2.
  \end{equation*}
  Applying this operator to the function $u$ in \eqref{eq:exu}, one notice that the first order term $2iA\cdot\nabla u$ vanishes, since $u$ is radial and we choose the Cr\"onstrom gauge $A\cdot x\equiv0$; on the other hand a purely imaginary, non null zero-order term $i\text{div}\,A$ naturally appears, being $A$ real valued. We refer to section \ref{sec:1} below for the details of the proof, which is a quite simple computation. 
\end{remark}
Theorem \ref{thm:example} motivates us to think to electromagnetic Schr\"odinger evolutions as a natural setting for the study of Hardy uncertainty principles.
We also need to keep in mind the well known fact that the magnetic ground states (and hence the corresponding standing waves) have gaussian decay (see \cite{E} and the references therein). 

In the recent years, some results in the spirit of the Hardy principle appeared, concerning with generic first-order perturbations of Schr\"odinger operators. Among the others, Dong and Staubach in \cite{DS}
proved that an uncertainty property holds, under suitable assumptions on the lower order terms; nevertheless, a quantitative knowledge of the critical constant in the gaussian weights seems to be difficult to be found, due to the generality of the model. The paper \cite{DS} generalize a previous result by Ionescu and Kenig in \cite{IK}, in which unique continuation from the exterior of a ball is proved, in the same setting.

We stress that an electromagnetic field is not any first-order perturbation of a Schr\"odinger operator, since it has a peculiar intrinsic algebra which has to be taken into account. The feeling is that quantitative results could be obtained for such operators, under geometric assumptions on the magnetic field. As an example, we mention \cite{BFGRV}, where a non-sharp version of the Hardy uncertainty principle (with $\alpha\beta<2T$) in presence of (possibly large) magnetic fields has been recently proved, inspired to the techniques in \cite{EKPV2}. 
The last result of this paper improves the ones in \cite{BFGRV}, covering the sharp range $\alpha\beta<4T$. In order to settle the theorem, we need to introduce a few notations.

In the sequel,
we denote by $A=(A^1(x),\dots,A^n(x))\colon \Rn \to \Rn$ a real
vector field (magnetic potential). The magnetic field, denoted by $B\in M_{n\times n}(\R)$ is the antisymmetric gradient of $A$, namely 
\begin{equation*} 
  B=B(x)=DA(x)-DA^t(x),
  \qquad
  B_{jk}(x)=A_j^k(x)-A^j_k(x).
\end{equation*} 
In dimension $n=3$, $B$ is identified with the vector field $\text{curl}\,A$, by the elementary properties of antisymmetric matrices.
We can now state the last result of this paper.
\begin{theorem}\label{thm:main} 
  Let $n\geq 3$, and let $u\in\mathcal C([0,1];L^2(\R^n))$ be a
  solution to
  \begin{equation}\label{eq:main3hardy}
    \partial_tu = i\left(\Delta_A+V_1(x)+V_2(x,t)\right)u
  \end{equation} in $\R^n\times[0,1]$, with $A=(A^1(x),\dots,A^n(x))\in \mathcal C^{1,\varepsilon}_{\text{loc}}(\R^n;\R^n)$,
  $V_1=V_1(x):\R^n\to\R$, \mbox{$V_2=V_2(x,t):\R^{n+1}\to\C$}.
  Moreover, denote by $B=B(x)=DA-DA^t$, $B_{jk}=A^k_j-A^j_k$ and assume
  that there exists a unit vector $\xi\in\mathcal S^{n-1}$ such that
  \begin{equation}\label{hypo:xi} 
    \xi^tB(x)\equiv0.
    \end{equation} 
  Finally, assume that
  \begin{align}
    & \|x^tB\|_{L^\infty}^2 <\infty  \label  {eq:thm1} \\
     & \|V_1\|_{L^\infty} <\infty \label{eq:assV3hardy} \\
     & \sup_{t\in[0,1]}\left\|e^{\frac{|\cdot|^2}{(\alpha t+\beta
          (1-t))^2}}V_2(\cdot,t)\right\|_{L^\infty}
    e^{\sup_{t\in[0,1]}\left\|\Im V_2(\cdot,t)\right\| _{L^\infty}}
    <\infty 
    \label{eq:thm2}
\\
     & \left\|e^{\frac{|\cdot|^2}{\beta^2}}
      u(\cdot,0)\right\|_{L^2}+ \left\|e^{\frac{|\cdot|^2}{\alpha^2}}
      u(\cdot,1)\right\|_{L^2}<\infty, \label{eq:decay3hardy}
  \end{align}
 for some $\alpha,\beta>0$. 

The following holds:
\begin{itemize}
\item if $\alpha\beta<4$ then $u\equiv0$.
\item if $\alpha\beta\geq 4$ then 
  \begin{equation}\label{eq:weightedestimate}
    \begin{split}
      \sup_{t \in[0,1]}&\left\Vert e^{a(t)\abs{\cdot}^2}u(t)\right\Vert_{L^2(\Rn)}+ 
      \left\Vert \sqrt{t(1-t)} \nabla_A(e^{a(t)+\frac{i \dot a(t)}{8a(t)}\abs{\cdot}^2}u)\right\Vert_{L^2(\Rn\times [0,1])}  \\
      & \leq N \left[\left\Vert e^{\frac{\abs{\cdot}^2}{\beta^2}}u(0)\right\Vert_{L^2(\Rn)}
        + \left\Vert e^{\frac{\abs{\cdot}^2}{\alpha^2}}u(1)\right\Vert_{L^2(\Rn)}\right],
    \end{split}
  \end{equation}
with 
\begin{equation*}
  a(t)=\frac{\alpha\beta R}{2(\alpha t + \beta (1-t))^2+2R^2(\alpha t - \beta (1-t))^2},
\end{equation*}
where $R$ is the smallest root of the equation
\begin{equation*}
  \frac{1}{2\alpha\beta}=\frac{R}{4(1+R^2)},
\end{equation*}
and $N>0$ is a constant depending on $\alpha,\beta$ and $\norma{V}_{L^\infty(\Rn\times[0,1])}$, $\norma{x^t B}_{L^\infty(\Rn)}$. 
\end{itemize}
\end{theorem}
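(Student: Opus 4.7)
The plan is to adapt the logarithmic convexity scheme of Escauriaza--Kenig--Ponce--Vega \cite{EKPV3} to the magnetic setting, sharpening the constant obtained in \cite{BFGRV} from $2$ to the optimal value $4$ by exploiting the structural hypothesis \eqref{hypo:xi}. The central quantity is the weighted $L^2$-norm
\[
  H(t) := \bigl\| e^{a(t)\,|x + b(t)\,\xi|^2}\,u(t)\bigr\|_{L^2(\Rn)},
\]
where $a(t)$ is the explicit function in the statement and $b(t)\in\R$ is a real parameter to be determined. The decisive point is that $b(t)$ translates the gaussian centre \emph{only} in the direction $\xi$ along which $B$ degenerates: any term of the form $(b(t)\xi)^{t}B(x)$ generated by the conjugation with $e^{\phi}$, $\phi=a(t)|x+b(t)\xi|^{2}$, vanishes identically, so that the magnetic commutator effectively behaves as in the non--magnetic case.

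First I would perform the standard preparatory reductions: a density argument restricts us to smooth, compactly supported data, and an approximation in $V_{2}$ justifies all the formal computations below. The evolution is then cast in symmetric/antisymmetric form $\partial_{t}u=\mathcal{S}u+\mathcal{T}u$, where $\mathcal{S}^{\ast}=\mathcal{S}$ is the symmetric part $-\Im V_{2}$ and $\mathcal{T}^{\ast}=-\mathcal{T}$ is the antisymmetric part $i(\Delta_{A}+V_{1}+\Re V_{2})$. Setting $f:=e^{\phi}u$, a direct computation shows that $f$ satisfies $\partial_{t}f=\widetilde{\mathcal{S}}f+\widetilde{\mathcal{T}}f+G$, with $\widetilde{\mathcal{S}}$ symmetric, $\widetilde{\mathcal{T}}$ antisymmetric, and $G$ an error term controlled via \eqref{eq:thm2}.

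The core of the argument is the abstract log--convexity lemma of \cite{EKPV2}: under the above conditions one has
\[
  \partial_{t}^{2}\log\|f(t)\|_{L^{2}}^{2}\;\geq\;2\,\frac{\bigl\langle\bigl(\partial_{t}\widetilde{\mathcal{S}}-[\widetilde{\mathcal{S}},\widetilde{\mathcal{T}}]\bigr)f,\,f\bigr\rangle}{\|f\|_{L^{2}}^{2}}\;-\;C\,\frac{\|G\|_{L^{2}}^{2}}{\|f\|_{L^{2}}^{2}}.
\]
Explicitly computing the commutator $[\widetilde{\mathcal{S}},\widetilde{\mathcal{T}}]$ yields three kinds of contributions: a positive kinetic term of the form $8a(t)\,|\nabla_{A}f|^{2}$; a scalar quadratic form in $(x+b(t)\xi)$ whose coefficient is governed by a pair of coupled ODEs for $a(t)$ and $b(t)$; and a genuinely magnetic term of the schematic form $c\,a(t)\,(x+b(t)\xi)^{t}B(x)\,\nabla_{A}f$. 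By the hypothesis $\xi^{t}B\equiv 0$ the last term reduces to $c\,a(t)\,x^{t}B(x)\,\nabla_{A}f$, which, thanks to \eqref{eq:thm1}, is absorbed into the positive kinetic term by a Cauchy--Schwarz argument. It is exactly this cancellation which prevents the loss of the factor $2$ observed in \cite{BFGRV} and restores the sharp constant $4$.

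The choice of $a(t)$ in the statement is precisely the one for which the remaining scalar quadratic form is nonnegative up to the critical threshold $\alpha\beta=4$, and is computed by solving the Riccati--type ODE arising from matching the endpoint values $a(0)=1/\beta^{2}$, $a(1)=1/\alpha^{2}$ compatibly with the log--convexity inequality. Combining the log--convexity of $H(t)$ with the boundary control \eqref{eq:decay3hardy} then yields $u\equiv 0$ when $\alpha\beta<4$ and the quantitative estimate \eqref{eq:weightedestimate} when $\alpha\beta\geq 4$; the bound on the weighted magnetic gradient $\sqrt{t(1-t)}\,\nabla_{A}(\cdots)$ follows by integrating in time the positive kinetic contribution to the commutator, exactly as in \cite{EKPV3}. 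The main obstacle I expect is the identification of the coupled ODEs for $a(t)$ and $b(t)$ ensuring simultaneously that the $\xi$--translation annihilates the magnetic commutator and that the resulting scalar inequality is tight at $\alpha\beta=4$; once this bookkeeping is correctly performed, the remainder of the proof follows the by now standard EKPV--Carleman lines.
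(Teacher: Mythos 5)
Your plan misses the central mechanism of the proof and, as written, would not close. You propose a \emph{single} logarithmic-convexity argument with $H(t)=\|e^{a(t)|x+b(t)\xi|^2}u(t)\|$ where $a(t)$ is already the optimal function in the statement and $b(t)$ is a fixed auxiliary parameter. But the hypothesis \eqref{eq:decay3hardy} (after the Appell transformation) only gives finiteness of $\sup_t\|e^{\mu|x|^2}u(t)\|$ with $\mu=1/(2\alpha\beta)$, which is strictly \emph{weaker} than the weight $e^{a(t)|x|^2}$ you want to plug in: for $\alpha\beta$ close to $4$, $a(0)$ can be arbitrarily larger than $\mu$. You therefore cannot justify applying the abstract log-convexity lemma with that weight; you have no a priori finiteness, and the density/approximation argument you invoke does not repair this because the limiting bound would depend on the very quantity you are trying to control. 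The paper's proof is organized precisely to avoid this: after the Cr\"onstrom gauge reduction and the Appell/time rescaling, it runs an \emph{iteration scheme} (following \cite{EKPV3}). One starts from $a_1\equiv\mu$, solves an ODE for a center function $b_k(t)$ driven by $F(a_k)$, applies the ``improved decay'' Lemma \ref{lem:lemma4} with center $b_k(t)\eta$ for every $\eta\in\R\xi$, and then multiplies by $e^{-2\varepsilon b_k(t)|\eta|^2}$ and \emph{integrates over} $\eta$ to produce an improved weight $a_{k+1}=a_k/(1-a_kb_k)$. Each step re-establishes the finiteness hypotheses \eqref{eq:check1}--\eqref{eq:check2} needed for the next, and $a(t)$ in the statement arises only in the limit $k\to\infty$ (or, if the iteration forces $1-a_k(0)b_k(0)\leq 0$ at some finite $k$, one concludes $u\equiv0$ directly). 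There is no way to skip this upgrade and go straight to the limiting $a(t)$.

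Relatedly, you misattribute the improvement from $\alpha\beta<2$ (the result of \cite{BFGRV}) to $\alpha\beta<4$ to the cancellation $\xi^tB\equiv0$. That cancellation is indeed used — it makes $(x+b(t)\xi)^tB=x^tB$ inside the commutator, so the magnetic cross-term can be absorbed by Cauchy--Schwarz against the positive kinetic piece — but it was already exploited in \cite{BFGRV} and by itself only yields the non-sharp threshold $\alpha\beta\leq 2$ (this is exactly Lemma \ref{lem:parziale}, which corresponds to the first step $a_1\equiv\mu$, $b_1(t)=16\mu(1-t^2)$, $1-a_1(0)b_1(0)\leq0\Leftrightarrow\mu\geq1/4$). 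What buys the sharp constant $4$ is the iteration-with-translation trick of \cite{EKPV3} described above. The role of \eqref{hypo:xi} in the iterative step is more specific than ``making the commutator behave like the free case'': it guarantees that moving the gaussian center \emph{along $\xi$} costs nothing in the magnetic cross-term, so the translation-and-integration step can be performed arbitrarily many times without accumulating errors from $B$. If you rewrite your proposal around the iteration scheme and the improved decay lemma (which itself needs a careful mollification argument because $u$ is a priori only $\mathcal C([0,1];L^2)$ and $A\in\mathcal C^{1,\varepsilon}_{\text{loc}}$), the rest of your outline — Appell transformation, Cr\"onstrom gauge, absorption of $x^tB$ via Cauchy--Schwarz, and the time-integrated kinetic bound giving $\sqrt{t(1-t)}\nabla_A(\cdot)$ — is on the right track.
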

\begin{remark}
Notice that, apart from the local regularity assumption $A\in \mathcal C^{1,\varepsilon}_{\text{loc}}$, which is the minimal request in order to justify an approximation argument in Lemma \ref{lem:lemma4} below, all the hypotheses of Theorem \ref{thm:main} are in terms of $B$ and $V$, respectfully of the gauge invariance of the result.
It is possible to prove, by standard perturbation theory (see e.g. Proposition 2.6 in \cite{BFGRV}), that, under the assumptions of Theorem \ref{thm:main}, the operator $-\Delta_A-V_1$ is self-adjoint on $L^2$, with form domain $H^1(\R^n)$; this fact will be always implicitly used in the rest of the paper.
\end{remark}
\begin{remark}\label{rem:scaling2}
  The choice of the time interval $[0,1]$ does
  not led the generality of the results. Indeed, $v\in
  C([0,T],L^2(\Rn))$ 
  is solution to \eqref{eq:main} in \mbox{$\Rn\times[0,T]$}
  if and only if $u\colon [0,1]\to \C$, $u(x,t)=T^{\frac{n}{4}}v(\sqrt{T}x,Tt)$ is solution to
\begin{equation*}
  \partial_t u = i(\Delta_{A_T}u + V_T (x,t) u), \text{ in }[0,1]\times \Rn,
\end{equation*} 
where
\begin{equation*} 
A_T(x,t)=\sqrt{T}A(\sqrt{T}x,Tt),\quad V_T(x)=T
  V(\sqrt{T}x,Tt).
\end{equation*} 
Moreover, observe that
\begin{equation*}
  \begin{split}
    \norma{e^{\frac{\abs{x}^2}{\beta^2}}v(0)}=\norma{e^{\frac{\abs{x}^2}{\beta'^2}}u(0)}, & \quad
    \norma{e^{\frac{\abs{x}^2}{\alpha^2}}v(T)}=\norma{e^{\frac{\abs{x}^2}{\alpha'^2}}u(1)},  \\
    \sup_{t\in[0,T]} \norma{e^{\frac{T^2\abs{x}^2}{(\alpha t +
          \beta(T-t))^2}}v(t)}= & \sup_{t\in[0,1]}
    \norma{e^{\frac{\abs{x}^2}{(\alpha' t + \beta' (1-t))^2}}u(t)}.
  \end{split}
\end{equation*}
with $\beta'=T^{-\frac{1}{2}}\beta$, $\alpha'=T^{-\frac{1}{2}}\alpha$.  
\end{remark} 
\begin{remark}
  The magnetic field in Theorem \ref{thm:main} does not depend on time, differently from the example in Theorem \ref{thm:example}. Nevertheless, it could be probably possible to generalize the result to the case of time dependent magnetic fields, by assuming the existence of the purely magnetic flow and the $L^2$-preservation, but this will not be an object of study in the present paper. 
\end{remark}
\begin{remark}
  Assumption \eqref{hypo:xi} is fundamental in our strategy of proof, and it does not allow to include the 2D-case in the statement of Theorem \ref{thm:main}, due to elementary properties of antisymmetric matrices. We mention \cite{BFGRV} for some explicit examples of magnetic fields satisfying \eqref{hypo:xi}.
  It is an interesting open question if there exist examples of magnetic fields which do not satisfy \eqref{hypo:xi}, for which the Hardy uncertainty holds with different quantitative constants or different exponential decays.
  Observe that the example in \eqref{eq:exA} satisfies \eqref{hypo:xi}, with $\xi=(0,0,1)$. Indeed, an explicit computation shows that
  \begin{equation*}
    B=\text{curl}\,A=\frac{2kt}{1+t^2}\cdot\frac{2z}{(x^2+y^2)(1+r^2)^2}\left(-y, x, 0\right).
  \end{equation*}
\end{remark}
The strategy of the proof of Theorem \ref{thm:main} is the following:
\begin{enumerate}
\item first we reduce to the Cr\"onstrom gauge $x\cdot A \equiv 0$ (see Section \ref{sec:cronstrom}), which turns out to be a helpful choice;
\item by conformal (or Appell) transformation (see Lemma \ref{lem:appell}), we reduce to the case 
  $\alpha=\beta$, and we perform a time scaling to reduce to the time interval $[-1,1]$ (see Section \ref{sec:Appell}); \label{step2}
\item we prove Theorem \ref{thm:main} in the case $\mu:=\alpha=\beta$ (see Section \ref{sec:conclusion}); \label{step3}
\item we translate the result in terms of the original solution, by inverting the transformations at step \ref{step2},
  obtaining the final result.
\end{enumerate}
The key ingredient is Lemma \ref{lem:lemma4} below, which comes into play in the proof of step \ref{step3}. This is based on an iteration scheme, introduced in \cite{EKPV3}: by 
successive approximations, we can start an iterative improvement of the decay assumption \eqref{eq:decay3hardy}, by suitably moving the center of the gaussian weight. In the limit, this argument leads to an optimal choice of the function $a=a(t)\colon [-1,1]\to
\R$ for which the estimate
\begin{equation}
  \label{eq:stimapesata}
  \norma{e^{a(t)\abs{x}^2}u(x,t)}_{L^\infty_t([-1,1])L^2_x(\Rn)}\leq
  C(\alpha,\beta,T,\norma{V}_{L^\infty},\norma{x^t B}_{L^\infty})
\end{equation} 
holds. The presence of a magnetic fields makes things quite more complicate, once the iteration starts, as wee see in the sequel.
The rest of the paper is devoted to the proofs of our main theorems. 

{\bf Acknowledgements.} We wish to thank Simone Cacace for some useful numerical simulations about the iteration scheme in Section \ref{sec:conclusion} below. We also wish to thank Gustavo Ponce for addressing reference \cite{E} to us.  

\section{Proof of Theorem \ref{thm:example}}\label{sec:1}
The proof of Theorem \ref{thm:example} is a straightforward computation.
First, we expand the magnetic Laplace operator and rewrite
\begin{equation*}
  \left(i\partial_t+\Delta_A\right)u=
  \left(i\partial_t+\Delta\right)u 
  -2iA\cdot\nabla_x u-i(\text{div}_xA)u-|A|^2u.
\end{equation*}
Now we compute
\begin{align*}
  \left(i\partial_t+\Delta\right)u 
  &
  = 
  \frac1{1+r^2}\left(\frac{2k}{1+it}+6k-\frac{4k(k+1)r^2}{1+r^2}\right)
  \\
  &
  =
  \frac1{1+r^2}\left(-\frac{2ikt}{1+t^2}+\frac{2k}{1+t^2}+6k-\frac{4k(k+1)r^2}{1+r^2}\right),
\end{align*}
where $u$ is given by \eqref{eq:exu}.
Observe that, since $u$ is radial and $A\cdot x\equiv0$ by the definition \eqref{eq:exA}, we have $A\cdot\nabla_xu\equiv0$.
Finally, another direct computation gives
\begin{equation*}
  i\text{div}_xA = -\frac{2ikt}{1+t^2}\cdot\frac{1}{1+r^2}.
\end{equation*}
In conclusion, 
\begin{equation*}
  \left(i\partial_t+\Delta_A\right)u=
  \frac1{1+r^2}\left(\frac{2k}{1+t^2}+6k-\frac{4k(k+1)r^2}{1+r^2}-|A|^2\right)
  =Vu,
\end{equation*}
by the definition \eqref{eq:exV}, which completes the proof.

The rest of the paper is devoted to the proof of Theorem \ref{thm:main}.

\section{Some preliminary lemmata}\label{sec:prelim}

Let us fix some notations and recall some results from
\cite{EKPV3} and \cite{BFGRV}. We denote
\begin{equation*} (f,g):=\int_{\Rn} f \bar g \, dx, \quad
  H(f)=\norma{f}^2:=(f,f),
\end{equation*}
for $f,g\in L^2(\R^n)$.

\begin{lemma}[\cite{EKPV3}, Lemma 2]\label{lemmaastratto} Let
  $\mathcal{S}$ be a symmetric operator, $\mathcal{A}$ a skew-symmetric
  one, both allowed to depend on the time variable, and $f$ a smooth enough 
  function. Moreover let \mbox{$\gamma\colon [c,d]\to (0,+\infty)$} and
  $\psi \colon [c,d] \to \R$ be smooth functions. If 
  \begin{equation}\label{eq:lemmaastratto} (\gamma \, \mathcal{S}_t
    f(t)+\gamma \, [\mathcal{S},\mathcal{A}] f(t) + \dot{\gamma} \,
    \mathcal{S} f(t),f(t)) \geq -\psi(t) H(t), \quad t\in[c,d]
  \end{equation} 
then, for all $\varepsilon >0$,
  \begin{equation*} 
    H(t)+\eps \leq
    e^{2T(t)+M_\eps(t)+2N_\eps(t)}(H(c)+\eps)^{\theta(t)}(H(d)+\eps)^{1-\theta(t)},\quad
    t\in[c,d]
    \end{equation*}
where $T$ and $M_\eps$ verify
  \begin{equation*}
    \begin{sistema}
      \partial_t(\gamma \partial_t T)=-\psi,\quad t\in[c,d] \\ T(c)=T(d)=0,
    \end{sistema} \quad
    \begin{sistema}
      \partial_t (\gamma \partial_t M_\eps)=-\gamma \frac{\norma{\partial_t
          f -\mathcal{S}f- \mathcal{A}f}^2}{H+\eps} \quad t\in[c,d]\\
      M_\eps(c)=M_\eps(d)=0,
    \end{sistema}
  \end{equation*}
  \begin{equation*} N_\eps=\int_c^d \left\vert \Re
      \frac{((\partial_s-\mathcal{S}-\mathcal{A})f(s),f(s))}{H(s)+\eps}\right\vert\,ds,
    \quad \theta(t)=\frac{\int_t^d \frac{ds}{\gamma} }{\int_c^d
      \frac{ds}{\gamma}}.
  \end{equation*} 
Moreover
  \begin{equation}\label{eq:astrattoutile}
    \begin{split}
      \partial_t&(\gamma \, \partial_t H -\gamma \, \Re(\partial_t f -
      \mathcal{S} f- \mathcal{A}f,f))+ \gamma \, \norma{\partial_t f
        -\mathcal{S}f-\mathcal{A}f}^2 \\ &\geq 2 (\gamma \, \mathcal{S}_t
      f + \gamma \, [\mathcal{S},\mathcal{A}]f + \dot \gamma \,
      \mathcal{S}f,f).
    \end{split}
  \end{equation}
\end{lemma}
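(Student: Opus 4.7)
The plan is to deduce the asserted three-point bound from the differential identity \eqref{eq:astrattoutile}, by first verifying that identity, then combining it with the hypothesis to obtain a $\gamma$-weighted convexity statement for $\log(H+\eps)$, and finally invoking a convex comparison argument in the reparametrized variable $s(t):=\int_c^t d\tau/\gamma(\tau)$.

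I would first prove \eqref{eq:astrattoutile} by direct computation. Writing $Df:=\partial_t f-\mathcal{S}f-\mathcal{A}f$ for the defect and using symmetry of $\mathcal{S}$ and skew-symmetry of $\mathcal{A}$, one obtains $\dot H=2(\mathcal{S}f,f)+2\Re(Df,f)$. Time-differentiating $2\gamma(\mathcal{S}f,f)$ and substituting $\partial_t f=\mathcal{S}f+\mathcal{A}f+Df$ into the two ``sandwich'' terms produces the commutator $([\mathcal{S},\mathcal{A}]f,f)$ from $(\mathcal{S}\mathcal{A}f,f)-(\mathcal{A}\mathcal{S}f,f)$, the time derivative $(\mathcal{S}_tf,f)$, a term $\dot\gamma(\mathcal{S}f,f)$ from the product rule, and the square $\gamma\|Df\|^2$ after completing the square against the remaining defect terms. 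Combining the resulting identity with the hypothesis \eqref{eq:lemmaastratto} yields the differential inequality
\begin{equation*}
  \partial_t\!\bigl(\gamma\,\partial_t H-\gamma\,\Re(Df,f)\bigr)+\gamma\,\|Df\|^2\geq -2\psi H.
\end{equation*}

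Next, set $G:=H+\eps$ and $\phi:=\log G$. The quotient rule gives $\partial_t(\gamma\,\partial_t\phi)=\partial_t(\gamma\dot H)/G-\gamma\,\dot H^2/G^2$, and the Cauchy--Schwarz estimate $|\Re(Df,f)|\leq\|Df\|\sqrt{G}$ allows me to convert the previous inequality into a lower bound of the shape
\begin{equation*}
  \partial_t(\gamma\,\partial_t\phi)\geq -2\psi\,\frac{H}{G}-\frac{\gamma\,\|Df\|^2}{G}-\partial_t\!\left(\frac{\gamma\,\Re(Df,f)}{G}\right).
\end{equation*}
The first two right-hand terms are precisely the sources driving the ODEs satisfied by $2T$ and $M_\eps$, while the third is an exact derivative whose total variation on $[c,d]$ is bounded by $2N_\eps$. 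Introducing
\begin{equation*}
  \Phi(t):=\log G(t)-2T(t)-M_\eps(t)-\theta(t)\log G(c)-(1-\theta(t))\log G(d)-R(t),
\end{equation*}
where $R$ is the absolutely continuous correction generated by the exact-derivative remainder, with $\|R\|_{L^\infty[c,d]}\leq 2N_\eps$, and using $\partial_t(\gamma\,\partial_t T)=-\psi$, $\partial_t(\gamma\,\partial_t M_\eps)=-\gamma\|Df\|^2/G$, and $\partial_t(\gamma\,\partial_t\theta)\equiv 0$, one verifies $\partial_t(\gamma\,\partial_t\Phi)\geq 0$. Thus $\Phi$ is convex in the variable $s(t)$, and since $\Phi(c),\Phi(d)\leq 0$, convexity forces $\Phi(t)\leq 0$ on $[c,d]$; exponentiating gives the claimed inequality.

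The main obstacle is the cross term $\Re(Df,f)/G$: having no definite sign, it cannot be absorbed as the source of an ODE for a correction with vanishing boundary data, in the spirit of $T$ or $M_\eps$. The remedy is to control it only in $L^1_t$, which is exactly what produces the uniform exponential factor $e^{2N_\eps}$ instead of a pointwise correction. Tracking this term through the integration by parts, together with the factor $2$ and the telescoping of the resulting boundary contributions, is the delicate bookkeeping part of the argument; the verification of \eqref{eq:astrattoutile} itself is routine but algebraically lengthy, which is why it is stated as an independent identity.
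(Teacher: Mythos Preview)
The paper does not actually prove this lemma: it is quoted verbatim from \cite{EKPV3}, Lemma~2, and no argument is supplied here. Your sketch follows the standard route taken in \cite{EKPV3}---deriving the identity \eqref{eq:astrattoutile} by differentiating $\gamma\dot H=2\gamma(\mathcal S f,f)+2\gamma\Re(Df,f)$ and completing the square, then passing to $\phi=\log(H+\eps)$ and comparing with the auxiliary functions $T,M_\eps,\theta$ via the reparametrization $ds=dt/\gamma$---so in approach you are aligned with the original source.

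One small point to tighten: after dividing by $G=H+\eps$ you obtain the source term $-2\psi\,H/G$ rather than $-2\psi$, whereas the ODE defining $T$ uses $-\psi$. Since the lemma allows $\psi$ of arbitrary sign, the inequality $-2\psi H/G\ge -2\psi$ is not automatic. The clean way around this (and the way it is usually organized in \cite{EKPV3}) is not to pass to $\log G$ first and then compare term by term, but to work directly with the second $\gamma$-derivative of $\log G-2T-M_\eps$: the numerator of $\partial_t(\gamma\partial_t\log G)$ already contains $\partial_t(\gamma\dot H)$ and $-\gamma\dot H^2/G$, and the latter combines with $4\gamma(\mathcal S f,f)^2/G$ (which you have not yet isolated) and the cross terms to produce the needed nonnegativity after Cauchy--Schwarz. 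In other words, the square-completion happens at the level of $\log G$, not at the level of $H$, and the $-\gamma\dot H^2/G^2$ term is essential to absorb the $(\mathcal S f,f)^2$ contribution. Once that is in place, the remainder is handled exactly as you describe, with the exact-derivative term producing the factor $e^{2N_\eps}$.
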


For
$\varphi=\varphi(x,t):\R^{n+1}\to\R$, we can write
\begin{equation*} 
  e^{\varphi(x,t)}(\partial_t -
  i\Delta_A)e^{-\varphi(x,t)}= \partial_t - \mathcal{S}- \mathcal{A}
\end{equation*} 
where
\begin{align}
  \label{eq:S} \mathcal S & =
  -i\left(\Delta_x\varphi+2\nabla_x\varphi\cdot\nabla_A\right)
  +\varphi_t \\
  \label{eq:A} \mathcal A & = i
  \left(\Delta_A+|\nabla_x\varphi|^2\right)
\end{align} 
(see \cite{BFGRV}). Observe that $\mathcal{S}$ and $\mathcal A$ are
respectively a symmetric and a skew-symmetric operator.  
Our first goal is to apply Lemma \ref{lemmaastratto} with a suitable choice of $\mathcal S,\mathcal A$. In order to do this, we need to obtain the lower bound \eqref{eq:lemmaastratto} when $\mathcal S$ and $\mathcal A$ are given by \eqref{eq:S} and \eqref{eq:A}, respectively: this is done in the following lemma, analogous to \cite{EKPV3}, Lemma 3.

\begin{lemma}\label{lemma3}
Let 
\begin{equation}\label{eq:varphi}
  \begin{split}
  &\varphi(x,t)=a(t)\abs{x+\bold b(t)}^2,\\
  &a=a(t) \colon \R \to \R,  \quad \mathbf{b}=\mathbf{b}(t)=b(t)\xi\colon \R \to \Rn,
  \quad
  \xi\in\mathbb S^{n-1},
  \end{split}
\end{equation}
and $\mathcal S,\mathcal A$ be defined as in \eqref{eq:S} and \eqref{eq:A}. Assume that
 \begin{equation}
   \label{eq:ipotesilemmaastratto4}
   \begin{split}
     x \cdot A_t(x)&= 0, \\
   \mathbf{b} \cdot A_t(x)&=0,
 \end{split}
\end{equation}
for all $x \in \R^n$ and assume \eqref{hypo:xi}.
Assume moreover 
  \begin{equation} 
     F(a,\gamma)=\gamma\left( \ddot a + 32 a^3
      -\frac{3\dot a^2}{2a} -\frac{a}{2}\left(\frac{\dot a}{a}+\frac{\dot
          \gamma}{\gamma}\right)^2\right) >0 \text{ in }[c,d].
  \end{equation} 
  Then, for a smooth enough function $f$, 
  \begin{equation} 
    ((\gamma \mathcal{S}_t + \gamma
    [\mathcal{S},\mathcal{A}]+\dot \gamma \mathcal{S})f,f) \geq -\left(\left(
      \frac{\gamma^2 a^2 \abs{\mathbf{\ddot b}}^2}{F(a,\gamma)} +2 \gamma a \norma{x^t B}_{L^\infty}^2\right)f,f\right), 
     \quad \text{ for all } t \in [c,d].
  \end{equation}
\end{lemma}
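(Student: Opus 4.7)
The plan is to follow the general scheme of Lemma 3 in \cite{EKPV3}, which handles the magnetic-free case, while tracking the additional contributions coming from the covariant derivative $\nabla_A$. Concretely I would compute separately $\gamma\mathcal{S}_t$, $\gamma[\mathcal{S},\mathcal{A}]$ and $\dot\gamma\mathcal{S}$ acting on $f$, test against $f$, then group the resulting terms into three categories: a ``purely temporal'' part producing the expression $F(a,\gamma)$, a ``translation'' part involving $\ddot{\mathbf{b}}$, and a ``magnetic curvature'' part involving $B$.

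First I would expand $\mathcal{S}_t$. Since $\varphi=a|x+\mathbf{b}|^2$ gives $\nabla_x\varphi=2a(x+\mathbf{b})$ and $\Delta_x\varphi=2na$, differentiating in $t$ produces terms in $\dot a$, $\dot{\mathbf{b}}$ and a contribution from $\p_t\nabla_A=-iA_t$. The hypotheses \eqref{eq:ipotesilemmaastratto4} make $\nabla_x\varphi\cdot A_t\equiv 0$, so the $A_t$ contribution drops and $\mathcal{S}_t$ reduces to multiplication and first-order terms that are linear in $\dot a$ and $\dot{\mathbf{b}}$. Next I would compute $[\mathcal{S},\mathcal{A}]$ by splitting it into
\begin{equation*}
[-i(\Delta_x\varphi+2\nabla_x\varphi\cdot\nabla_A),\ i\Delta_A]+[-i(\Delta_x\varphi+2\nabla_x\varphi\cdot\nabla_A),\ i|\nabla_x\varphi|^2]+[\varphi_t,\ i\Delta_A].
\end{equation*}
The middle bracket is a classical Virial-type computation giving the cubic term $32a^3|x+\mathbf{b}|^2$. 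The first bracket produces $-8a\Delta_A$ plus a remainder proportional to the curvature, which is where the magnetic field first appears through $[\nabla_A^j,\nabla_A^k]=-iB_{jk}$. The third bracket combines with $\mathcal{S}_t$ to cancel the $\Delta\varphi_t$ type pieces, so that after summation the nontrivial new contribution is a first-order term of the form $c\,\gamma\,a\,(\nabla_x\varphi)^tB\cdot\nabla_A$.

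Then I would exploit assumption \eqref{hypo:xi}. Because $\nabla_x\varphi=2a(x+b\xi)$ and $\xi^tB\equiv 0$, the magnetic first-order term reduces to something bounded pointwise by $\|x^tB\|_{L^\infty}|\nabla_A f|$. By a Cauchy--Schwarz/completing-the-square argument it can be absorbed into the positive $-8a\Delta_A$ quadratic form $\int 8a|\nabla_A f|^2$ arising from the Virial part, at the price of the error $2\gamma a\|x^tB\|_{L^\infty}^2(f,f)$. Analogously, the $\ddot{\mathbf{b}}$ piece of $\mathcal{S}_t$ (which is also first-order in $\nabla_A$) can be handled by completing the square against the $F(a,\gamma)$-quadratic form constructed as in \cite{EKPV3}; this produces the correction $\gamma^2 a^2|\ddot{\mathbf{b}}|^2/F(a,\gamma)$.

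The main obstacle I expect is the bookkeeping in the commutator $[2\nabla_x\varphi\cdot\nabla_A,\Delta_A]$: one has to carefully separate the part that reproduces the Virial identity from the genuinely new magnetic terms, and verify that when $\xi^tB=0$ all otherwise uncontrollable magnetic remainders (in particular, the terms proportional to $\mathbf{b}^tB$) vanish, leaving only those proportional to $x^tB$ which are covered by \eqref{eq:thm1}. Once that reduction is done, the rest is an exercise in completing the square analogous to the magnetic-free Lemma in \cite{EKPV3}, which directly yields the claimed lower bound with the additional $2\gamma a\|x^tB\|_{L^\infty}^2$ term.
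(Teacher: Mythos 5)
Your overall scheme (expand $\mathcal S_t$, $[\mathcal S,\mathcal A]$, $\dot\gamma\mathcal S$, test against $f$, use \eqref{eq:ipotesilemmaastratto4} to kill the $A_t$ piece, complete squares, absorb the magnetic remainder) matches the paper. There are, however, two issues, one minor and one that is a genuine gap.

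Minor: the $\ddot{\mathbf{b}}$ contribution is \emph{not} first-order in $\nabla_A$. It comes from $\varphi_{tt}$ inside $\mathcal S_t$, so it is the multiplication operator $2a\,\ddot{\mathbf b}\cdot(x+\mathbf b)$; it is indeed completed against the $F(a,\gamma)\abs{x+\mathbf b}^2$ quadratic form, as you say, but the cross term is zero-order, not a covariant gradient term.

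The real gap is in how you absorb the magnetic remainder. After completing squares, the positive gradient term that is actually available in the identity is the \emph{completed} square
\begin{equation*}
\int_{\Rn} 8\gamma a \left\vert -i\nabla_A f + \frac{\dot{\mathbf b}}{2}f + \left(\frac{\dot a}{2a}+\frac{\dot\gamma}{4\gamma}\right)(x+\mathbf b)f\right\vert^2\,dx,
\end{equation*}
and \emph{not} the raw $\int 8\gamma a \abs{\nabla_A f}^2$: the latter has already been consumed in producing the $F(a,\gamma)$ term and the $\ddot{\mathbf b}$ correction. If you try, as you propose, to bound the magnetic remainder by Cauchy--Schwarz against $\int 8\gamma a\abs{\nabla_A f}^2$ directly, you either double-count that term or you can only take a fraction $\epsilon<1$ of it, which spoils the function $F(a,\gamma)$ and loses the stated constant. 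The key step you are missing is that, thanks to antisymmetry of $B$ \emph{together} with $\xi^tB\equiv 0$, one may freely replace $\overline{\nabla_A f}$ inside the pairing $f\,x^tB\,\overline{\nabla_A f}$ by the conjugate of the entire completed-square expression: indeed $x^tB\cdot x=0$ by antisymmetry and $x^tB\cdot\xi=-\xi^tB\cdot x=0$ by \eqref{hypo:xi}, so the extra additions $i\frac{\dot{\mathbf b}}{2}f+i\left(\frac{\dot a}{2a}+\frac{\dot\gamma}{4\gamma}\right)(x+\mathbf b)f$ are invisible to $x^tB$. Only after this substitution can you run Cauchy--Schwarz against the \emph{completed} gradient square and land exactly on the $2\gamma a\norma{x^tB}_{L^\infty}^2(f,f)$ error. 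You use $\xi^tB\equiv0$ once (to drop $\mathbf b^tB$) but the second, equally essential use is absent from your proposal.

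Note also a subtlety you glossed over: the commutator $[\mathcal S,\mathcal A]$ produces the magnetic remainder already in the form $\Im\int f(x+\mathbf b)^t B\overline{\nabla_A f}$ with the \emph{full} covariant gradient, not $\Im\int f(x+\mathbf b)^tB\overline{\nabla f}$; the absorption must be run against the covariant gradient, so thinking of the raw $\Delta_A$ contribution as simply $8a\int\abs{\nabla_A f}^2$ is correct as a starting identity, but the subsequent completion-of-squares reshuffles it, and the magnetic term has to be matched to the reshuffled form.
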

\begin{proof} The proof is analogous to the one of Lemma 3 in 
  \cite{EKPV3}, with some additional magnetic terms to be considered. Explicit
  computations (see Lemma 2.9 in \cite{BFGRV}) give:
  \begin{align*} 
      \int_{\Rn}\mathcal{S} f \bar f\,dx =
      & \int_{\Rn} \left[ -i(2n a \abs{f}^2+4a(x+\mathbf{b})\cdot\nabla_A f \bar f)\right]\,dx \\
      &+\int_{\Rn}\left[\dot a \abs{x+\mathbf{b}}^2\abs{f}^2 +2 a \mathbf{\dot b} \cdot (x+\mathbf{b})\abs{f}^2 \right] \, dx\\
       \int_{\Rn}\mathcal{A} f \bar f \,dx =
      & \int_{\Rn}\left[(i \Delta_A f+4i a^2 \abs{x+\mathbf{b}}^2 f)\bar f \right]\,dx \\
       \int_{\Rn}[\mathcal{S},\mathcal{A}]f \bar f \,dx=
      & \int_{\Rn}\left[ 8a \abs{\nabla_A f}^2 + 32a^3 \abs{x+\mathbf{b}}^2 \abs{f}^2\right]\,dx \\
      & -\int_{\Rn}\left[4\Im [f \, 2a(x+\mathbf{b})^t B \overline{\nabla_A f} ] \right]\,dx \\
       &+ \int_{\Rn} \left[ 4 \Im[\bar f \dot a (x+\mathbf{b})\cdot \nabla_A f +\bar f a \mathbf{\dot b} \cdot \nabla_A f] \right] \, dx
     \end{align*}
     \begin{align*}
       \int_{\Rn}\mathcal{S}_t f \bar f \,dx=
       & \int_{\Rn} \left[ 2\Im[(2\dot a(x+\mathbf{b})+2a\mathbf{\dot b})\cdot\nabla_A f]\bar f\right]\,dx\\ 
       & +\int_{\Rn} \left[\ddot a \abs{x+\mathbf{b}}^2- 4 a (x+\mathbf{b})\cdot A_t\right]\abs{f}^2  \,dx \\ 
      &+\int_{\Rn} \left[ 4 \dot a \mathbf{\dot b} \cdot (x+\mathbf{b}) + 2 a  \mathbf{ \ddot b} \cdot (x+\mathbf{b})+2a \abs{\mathbf{\dot b}}^2 \right]\abs{f}^2 \,
    dx.
  \end{align*} 
Summing up we get
  \begin{align*} 
    \int_{\Rn} (&\gamma \mathcal{S}_t + \gamma [\mathcal{S},\mathcal{A}]+\dot \gamma \mathcal{S})f \bar f\, dx \\
               =& \int_{\Rn}(\ddot a \gamma + 32 a^3 \gamma + \dot \gamma \dot a)\abs{x+\mathbf{b}}^2 \abs{f}^2\,dx \\ 
                &+ \int_{\Rn}[(4 \gamma \dot a \mathbf{\dot b} + 2 \gamma a \mathbf{\ddot b} 
                 + 2 \dot \gamma a \mathbf{\dot b})\cdot (x+\mathbf{b})+2\gamma a \abs{\mathbf{\dot b}}^2]\abs{f}^2\,dx \\
                &+ \int_{\Rn}8 \gamma a \abs{\nabla_A f}^2 + 2 \Re (-i \nabla_A f)\cdot \overline{(4 \gamma a \mathbf{\dot b} f)}\,dx \\ 
                &+ \int_{\Rn}2\Re (-i\nabla_A f)\cdot \overline{((2\dot \gamma a + 4 \gamma \dot a)(x+\mathbf{b})f)}\,dx \\ 
                & - \int_{\Rn}4 \Im(\gamma f 2a (x+\mathbf{b})^t B \overline{\nabla_A f})\,dx  \\
                & - \int_{\Rn}4 a \gamma \, (x+\mathbf{b})\cdot A_t \abs{f}^2\,dx .\\ 
  \end{align*} 
The last term in the previous equation vanishes,
because of \eqref{eq:ipotesilemmaastratto4}. Completing the squares in
the previous equation we get
\begin{equation}\label{eq:richiamero}
    \begin{split} 
      \int_{\Rn} (&\gamma \mathcal{S}_t + \gamma [\mathcal{S},\mathcal{A}]+\dot \gamma \mathcal{S})f \bar f\, dx  \\
                 =& \int_{\Rn} 8\gamma a \left\vert -i\nabla_A f + \frac{\mathbf{\dot b}}{2}f+ \left(\frac{\dot a }{2a}+
                     \frac{\dot \gamma}{4\gamma}\right)(x+\mathbf{b})f \right\vert ^2\,dx \\ 
                  &+\int_{\Rn}F(a,\gamma)\left\vert x+\mathbf{b}+\frac{a\gamma \mathbf{\ddot b}}{F(a,\gamma)}\right\vert^2\abs{f}^2\,dx
                   -\frac{\gamma^2 a^2 \abs{\mathbf{\ddot b}}^2}{F(a,\gamma)}\int_{\Rn}\abs{f}^2\,dx \\ 
                  &-8\gamma a \int_{\Rn}\Im(f(x+\mathbf{b})^t B \overline{\nabla_A f})\,dx.
    \end{split}
\end{equation} 
Thanks to hypotesis \eqref{hypo:xi} and the fact that
  $B$ is anti-symmetric we have
\begin{equation*} 
    f(x+\mathbf{b})^t B \overline{\nabla_A f} = f x^t B
    \overline{\nabla_A f} = f x^t B \overline{\left(\nabla_A f + 
     i \frac{\mathbf{\dot b}}{2}f+ i\left(\frac{\dot a}{2a}
    + \frac{\dot \gamma}{4\gamma}\right)(x+\mathbf{b})f\right)},
\end{equation*}
for almost all $x \in \Rn, t \in [0,1]$.


We can finally estimate
\begin{equation*}
    \begin{split} 
      8 \gamma a &\,\Im\int_{\Rn}f (x+\mathbf{b})^t B \overline{\nabla_A f}\,dx  \\
      &=8 \gamma a \Re\int_{\Rn}f x^t B \overline{\left(-i \nabla_A f
        + \frac{\mathbf{\dot b}}{2}f+ \left(\frac{\dot a}{2a}+\frac{\dot\gamma}{4\gamma}\right)(x+\mathbf{b})f\right)}\,dx \\ 
       &\leq  \, 2 \gamma a \norma{x^t B}_{L^\infty}^2 \int_{\Rn}\abs{f}^2\,dx \\ 
       &\quad +8 \gamma a \int_{\Rn}\left\vert -i\nabla_A f + \frac{\mathbf{\dot b}}{2}f
        +\left(\frac{\dot a}{2a}+\frac{\dot\gamma}{4\gamma}\right)(x+\mathbf{b})f\right\vert^2\,dx,
    \end{split}
\end{equation*} 
which proves the result.
\end{proof}

We now choose
\begin{equation*} \gamma:=a^{-1},
\end{equation*}
hence
\begin{equation*} F(a):=F(a,\gamma)= \frac{1}{a}\left( \ddot a + 32
    a^3 - \frac{3\dot a^2}{2a}\right).
\end{equation*}

The next result is the key ingredient in the proof of our main Theorem \ref{thm:main}. Its magnetic-free version $B\equiv0$ has been proved in \cite{EKPV3}.
\begin{lemma}[improved decay]\label{lem:lemma4}
Let $u \in C([-1,1],L^2(\Rn))$ be a solution to
\begin{equation*}
  \partial_t u = i(\Delta_A u + V(x,t)u) \quad \text{ in }\Rn\times[-1,1],
\end{equation*}
with $V$ a bounded complex-valued potential and $A\in W^{1+\eps,\infty}_{\text{loc}}(\Rn)$.
Assume that, for some $\mu>0$,
\begin{equation}\label{eq:check1}
  \sup_{t \in [-1,1]}\norma{e^{\mu \abs{x}^2}u(t)}< +\infty,
\end{equation}
and, for $a\colon [-1,1] \to (0,+\infty)$, smooth, even and such that
$\dot a \leq 0$, $a(1)=\mu$, $a\geq \mu$,  and $F(a)>0$ in $[-1,1]$, we have
\begin{equation}\label{eq:check2}
  \sup_{t\in [-1,1]}\norma{e^{(a(t)-\eps)\abs{x}^2}u(t)}<+\infty \quad \text{ for all }\eps >0.
\end{equation}
Then, for $\mathbf{b}=\mathbf{b}(t)=b(t)\xi \colon [-1,1]\to \Rn$, smooth, such that $\mathbf{b}(-1)=\mathbf{b}(1)=0$,
\begin{equation}
  \label{eq:spostaregaussiana}
  \norma{e^{a(t)\abs{x+\mathbf{b}(t)}^2}u(t)} \leq e^{T(t)+2\norma{V}_{L^\infty}+\frac{\norma{V}_{L^\infty}^2}{4}}
  \sup_{t\in[-1,1]}\norma{e^{\mu \abs{x}^2}u(t)}, \quad -1\leq t\leq 1,
\end{equation}
where $T$ is defined by
\begin{equation*}
  \begin{sistema}
    \partial_t\left(\frac{1}{a}\partial_t T\right)=-\left(\frac{\abs{\mathbf{\ddot b}}^2}{F(a)}
      +2\norma{x^t B}_{L^\infty}^2\right) \quad \text{ in }[-1,1]\\
    T(-1)=T(1)=0.
  \end{sistema}
\end{equation*}
Moreover, there is $C_a>0$ such that
\begin{equation}\label{eq:disuguaglianzalemma4}
  \begin{split}
    &\norma{\sqrt{1-t^2}\nabla_A(e^{a+\frac{i\dot a}{8a}\abs{x}^2}u)}_{L^2(\Rn\times[-1,1])}\\
    &+ C_a\norma{\sqrt{1-t^2}e^{a(t)\abs{x}^2}\nabla_A u}_{L^2(\Rn\times[-1,1])} \\
    &\leq e^{2\norma{V}_{L^\infty}+\frac{\norma{V}_{L^\infty}^2}{4}}\sup_{t\in[-1,1]}\norma{e^{\mu\abs{x}^2}u(t)}.
  \end{split}
\end{equation}
\end{lemma}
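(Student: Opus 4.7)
The plan is to apply the abstract log-convexity Lemma~\ref{lemmaastratto} to $H(t)=\|e^{\varphi(\cdot,t)}u(\cdot,t)\|^2$ with the choice $\varphi(x,t)=a(t)|x+\mathbf{b}(t)|^2$, the operators $\mathcal{S},\mathcal{A}$ from \eqref{eq:S}-\eqref{eq:A}, and the weight $\gamma(t)=1/a(t)$. Setting $f=e^{\varphi}u$ and using that $u$ solves $\partial_t u = i(\Delta_A+V)u$, a direct calculation gives the error identity
\[
(\partial_t-\mathcal{S}-\mathcal{A})f \;=\; e^{\varphi}(\partial_t u - i\Delta_A u) \;=\; iVf,
\]
hence $\|(\partial_t-\mathcal{S}-\mathcal{A})f\|\le \|V\|_{L^\infty}\|f\|$ and $|\Re((\partial_t-\mathcal{S}-\mathcal{A})f,f)|\le \|V\|_{L^\infty}(H+\varepsilon)$; these pointwise bounds translate into quantitative control on $M_\varepsilon$ and $N_\varepsilon$ in Lemma~\ref{lemmaastratto}, producing the constants $\|V\|_{L^\infty}^2/4$ and $2\|V\|_{L^\infty}$ appearing in \eqref{eq:spostaregaussiana}.

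The lower bound \eqref{eq:lemmaastratto} is furnished by Lemma~\ref{lemma3}. Since $A$ is time-independent one has $A_t\equiv 0$, trivially fulfilling \eqref{eq:ipotesilemmaastratto4}, and \eqref{hypo:xi} holds by assumption; the choice $\gamma=1/a$ makes the square $(\dot a/a+\dot\gamma/\gamma)^2$ in $F(a,\gamma)$ vanish and reduces the lower bound to $-(|\ddot{\mathbf{b}}|^2/F(a)+2\|x^tB\|_{L^\infty}^2)H(t)$, identifying $\psi$ and hence the two-point problem defining $T$. At the endpoints, $\mathbf{b}(\pm1)=0$ together with the evenness of $a$ and the condition $a(1)=\mu$ give $H(\pm1)=\|e^{\mu|x|^2}u(\pm1)\|^2$, which is controlled by \eqref{eq:check1}; the interior finiteness $H(t)<\infty$ for $t\in(-1,1)$, needed to start the convexity argument, is guaranteed by \eqref{eq:check2}, since $a\geq\mu$ and $\mathbf{b}$ stays bounded. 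Letting $\varepsilon\to 0$ in the conclusion of Lemma~\ref{lemmaastratto} then yields \eqref{eq:spostaregaussiana}.

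For the weighted gradient estimate \eqref{eq:disuguaglianzalemma4}, I integrate the identity \eqref{eq:astrattoutile} over $[-1,1]$ after multiplication by $1-t^2$, which vanishes at the endpoints and removes the boundary contribution, and specialize to $\mathbf{b}\equiv 0$. In the sharp expansion \eqref{eq:richiamero}, the term that stays positive is $8\gamma a\int |-i\nabla_A f+(\dot a/(2a)+\dot\gamma/(4\gamma))xf|^2$; with $\gamma=1/a$ this collapses to $8\int|-i\nabla_A f+(\dot a/(4a))xf|^2$, and a direct expansion using $f=e^{a|x|^2}u$ shows this equals $8\int |\nabla_A(e^{(a+i\dot a/(8a))|x|^2}u)|^2$, reproducing the first term on the left of \eqref{eq:disuguaglianzalemma4}. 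The second gradient term with real weight $e^{a|x|^2}$ is then recovered by the Leibniz rule $\nabla_A(e^{\varphi}u)=e^{\varphi}(\nabla_A u+(\nabla\varphi)u)$ and absorption of $(\nabla\varphi)u$ through the uniform-in-time gaussian bound already established.

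The main obstacle will be the rigorous justification of these formal manipulations. Two approximations are necessary: first, \eqref{eq:check2} provides control only with the exponent $a(t)-\varepsilon$, so the whole scheme must be carried out for the perturbed weight $(1-\delta)a$ and the limit $\delta\to 0$ taken afterwards; second, the commutator calculations of Lemma~\ref{lemma3} and the integrations by parts leading to \eqref{eq:richiamero} require more regularity of $u$ and $A$ than is available a priori, which is handled by spatial mollification (exploiting the hypothesis $A\in W^{1+\varepsilon,\infty}_{\mathrm{loc}}$) followed by a limiting argument based on the self-adjointness of $-\Delta_A-V_1$ on $L^2$. These reductions parallel the corresponding steps in \cite{EKPV3,BFGRV}.
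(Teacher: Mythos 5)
Your formal scheme — apply Lemma~\ref{lemmaastratto} to $f=e^{a|x+\mathbf{b}|^2}u$ with $\gamma=1/a$, invoke Lemma~\ref{lemma3} for the lower bound (so the square $(\dot a/a+\dot\gamma/\gamma)^2$ vanishes), read off $\psi=|\ddot{\mathbf{b}}|^2/F(a)+2\|x^tB\|_{L^\infty}^2$, control $M_\eps,N_\eps$ via $(\partial_t-\mathcal S-\mathcal A)f=iVf$, and then multiply \eqref{eq:astrattoutile} by $1-t^2$ and integrate to get the gradient bound — is the correct skeleton, and matches the paper. The algebra identifying $8|-i\nabla_A f+(\dot a/(4a))xf|^2$ with $8|\nabla_A(e^{(a+i\dot a/(8a))|x|^2}u)|^2$ is also correct.

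However, your treatment of the regularization step has a genuine gap, and the gap is precisely where the real work in this lemma lies. You write that the lack of regularity is ``handled by spatial mollification\ldots followed by a limiting argument based on the self-adjointness of $-\Delta_A-V_1$.'' This will not close the argument. The paper mollifies in \emph{space and time} (the solution is extended by zero past $|t|=1$ and convolved against a mollifier $\theta_\delta$ on $\R^{n+1}$), and the issue is not abstract regularity of the semigroup but the concrete structure of the commutator error $(\partial_t-\mathcal S_\eps-\mathcal A_\eps)(f_\eps\ast\theta_\delta)-i(Vf_\eps)\ast\theta_\delta=A_{\eps,\delta}+B_{\eps,\delta}$. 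The zero-order piece $A_{\eps,\delta}$ is $O(\delta)$ in $L^2$ and is harmless. But the piece $B_{\eps,\delta}$, which contains first-order terms of the form $\int f_\eps\,[\cdots]\cdot\nabla_y\theta_\delta^{x,t}\,dy\,ds$ coming from the magnetic gradient, is only $O(1)$ as $\delta\to0$ — see \eqref{eq:error2}. If you apply Lemma~\ref{lemmaastratto} directly, $M_{\eps,\delta}$ and $N_{\eps,\delta}$ pick up these $O(1)$ contributions and the clean constant $e^{2\|V\|_{L^\infty}+\|V\|^2_{L^\infty}/4}$ is lost.

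The paper resolves this with a bootstrap that your proposal does not contain and cannot skip: one first derives the weighted gradient estimate (the analogue of \eqref{eq:disuguaglianzalemma4} for the mollified quantities) using only the crude $O(1)$ bound on $B_{\eps,\delta}$; from this one deduces $\|\sqrt{(1-\delta_\eps)^2-t^2}\,\nabla_A f_\eps\|_{L^2}<\infty$; one then integrates by parts \emph{inside} $B_{\eps,\delta}$ — transferring the $\nabla_y$ from the mollifier onto $f_\eps$, which is now known to have a square-integrable gradient — and only then obtains the improved bound \eqref{eq:error3}, $\|B_{\eps,\delta}\|_{L^2}=O(\delta)$. With this in hand the log-convexity estimate and the constants in \eqref{eq:spostaregaussiana} follow by letting $\delta\to0$ and then $\eps\to0$. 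In other words, the order of operations is not ``log-convexity, then gradient estimate as a byproduct''; the gradient estimate must be established first, as the key step that makes the mollification errors vanish. This is the central technical content of the lemma and it is absent from your proof.
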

\begin{proof}
Extend $u$ to $\R^{n+1}$ as $u \equiv 0$ when $\abs{t}>1$ and, for $\eps>0$, set
\begin{equation*}
  a_\eps(t):=a(t)-\eps, \quad g_\eps(x,t)=e^{a_\eps(t)\abs{x}^2}u(x,t), 
\quad f_\eps(x,t)=e^{a_\eps(t)\abs{x+\mathbf{b}(t)}^2}u(x,t).
\end{equation*}
The function $f_\eps$ is in $L^{\infty}([-1,1],L^2(\Rn))$ and satisfies
\begin{equation*}
  \partial_t f_\eps -\mathcal{S_\eps}f_\eps - \mathcal{A_\eps}f_\eps=i V(x,t)f_\eps
\end{equation*}
in the sense of distribution, i.e.
\begin{equation*}
  \int_{\Rn} f_\eps \overline{(-\partial_s \zeta-\mathcal{S_\eps}\zeta+ \mathcal{A_\eps}\zeta)}\,dyds=i\int_{\Rn} V f_\eps \bar \zeta \,dyds
\end{equation*}
for all $\zeta \in C^{\infty}_0(\Rn\times (-1,1))$,
where $\mathcal{S_\eps}$ and $\mathcal{A_\eps}$ are defined as $\mathcal{S}$ and $\mathcal{A}$ are in \eqref{eq:S}, \eqref{eq:A} with $a_\eps$ in place of $a$.
We denote here $\mathcal{S}_\eps^{x,t}$, $\mathcal{A}_\eps^{x,t}$  and $\mathcal{S}_\eps^{y,s}$, $\mathcal{A}_\eps^{y,s}$ 
the operators acting on the variables $x,t$ and $y,s$ respectively.

Since all the previous results make sense for regular functions, 
the strategy is to mollify the function $f_\eps$, obtain results for the new regular function, 
and uniformly control the errors. 
Let then $\theta\in C^{\infty}(\R^{n+1})$ be a standard mollifier supported in the unit ball of $\R^{n+1}$ and,
 for $0< \delta \leq \frac{1}{4}$ set $g_{\eps,\delta}=g_\eps \ast \theta_\delta$, $f_{\eps,\delta}=f_\eps \ast \theta_\delta$, and
 \begin{equation*}
   \theta_\delta^{x,t}(y,s)=\delta^{-n-1}\theta\left(\frac{x-y}{\delta},\frac{t-s}{\delta}\right).
 \end{equation*}
The functions $f_{\eps,\delta}$ and $g_{\eps,\delta}$ are in $C^{\infty}([-1,1],\mathcal{S}(\Rn))$. 

By continuity, there exists $\eps_a >0$ such that
\begin{equation*}
  F(a_\eps)\geq \frac{F(a)}{2}, \quad \text{ in }[-1,1],
\end{equation*}
when $0<\eps \leq \eps_a$, and for such an $\eps>0$ it is possible to find $\delta_\eps>0$, with
$\delta_\eps$ approaching zero as $\eps$ tends to zero, such that
\begin{equation*}
  \left(a(t)-\frac{\eps}{2}\right)\abs{x}^2 \leq \mu \abs{x}^2, \quad
  \left(a(t)-\frac{\eps}{2}\right)\abs{x+\mathbf{b}(t)}^2 \leq \mu \abs{x}^2,
\end{equation*}
when $x\in\Rn$, $1-\delta_\eps \leq \abs{t}\leq 1$. In the following
we assume $0<\eps\leq \eps_a$ and $0<\delta\leq \delta_\eps$.

We can apply Lemma \ref{lemmaastratto} to $f_{\eps,\delta}$, with 
$H_{\eps,\delta}(t)=\norma{f_{\eps,\delta}(t)}^2,$ \mbox{$[c,d]=[-1+\delta_\eps,1-\delta_\eps]$},
$\gamma=a_\eps^{-1}$, $\mathcal{S}=\mathcal{S_\eps}$ and $\mathcal{A}=\mathcal{A_\eps}$: it turns out that
\begin{equation}\label{eq:111}
  H_{\eps,\delta}(t)\leq \left(\sup_{t\in[-1,1]}\norma{e^{\mu\abs{x}^2}u(t)}+\eps\right)^2 
  e^{2T_\eps(t)+M_{\eps,\delta}(t)+2N_{\eps,\delta}(t)}
\end{equation}
when $\abs{t}\leq 1-\delta_\eps$, and where $T_\eps$, $M_{\eps,\delta}$ and $N_{\eps,\delta}$
are defined by
  \begin{gather}
    \begin{sistema}\label{eq:222}
      \partial_t(\frac{1}{a_\eps} \partial_t T_\eps)=-\frac{\abs{\mathbf{\ddot b}}^2}{F(a_\eps)}-2\norma{x^t B}_{L^\infty}^2,
      \quad t\in[-1+\delta_\eps,1-\delta_\eps] \\ 
      T(-1+\delta_\eps)=T(1-\delta_\eps)=0
    \end{sistema}, \\
    \begin{sistema}\label{eq:333}
      \partial_t (\frac{1}{a_\eps} \partial_t M_{\eps,\delta})=-\frac{1}{a_\eps} 
      \frac{\norma{\partial_t f_{\eps,\delta} -\mathcal{S_\eps}f_{\eps,\delta}
          - \mathcal{A_\eps}f_{\eps,\delta}}^2}{H_{\eps,\delta}+\eps}, 
      \quad t\in[-1+\delta_\eps,1-\delta_\eps]\\
      M_{\eps,\delta}(-1+\delta_\eps)=M_{\eps,\delta}(1-\delta_\eps)=0
    \end{sistema}, \\
   N_{\eps,\delta}=\int_{-1+\delta_\eps}^{1-\delta_\eps} 
   \frac{\norma{(\partial_s-\mathcal{S_\eps}-\mathcal{A_\eps})f_{\eps,\delta}(s)}}
        {\sqrt{H_{\eps,\delta}(s)+\eps}}\,ds,
        \label{eq:444}
  \end{gather}

In view to let $\delta\to0$ in \eqref{eq:111}, \eqref{eq:222}, \eqref{eq:333}, \eqref{eq:444}, we compute

\begin{equation*}
  \begin{split}
    &(\partial_t f_{\eps,\delta}-\mathcal{S}_\eps^{x,t} f_{\eps,\delta} - \mathcal{A}_\eps^{x,t} f_{\eps,\delta})(x,t)\\
    =&\int_{\Rn} f_\eps(y,s)\overline{(-\partial_s\theta_\delta^{x,t}(y,s))}\,dyds
     + \int_{\Rn} (-\mathcal{S}^{x,t}_\eps - \mathcal{A}^{x,t}_\eps)f_{\eps}(y,s)\theta^{x,t}_\delta(y,s)\,dyds\\
    =&  \int_{\Rn} f_\eps(y,s)\overline{(-\partial_s-\mathcal{S}^{y,s}_\eps + \mathcal{A}^{y,s}_\eps)\theta_\delta^{x,t}(y,s)}\,dyds \\
     &+ \int_{\Rn} f_\eps(y,s)(-\mathcal{S}^{x,t}_\eps - \mathcal{A}^{x,t}_\eps+ 
       \overline{\mathcal{S}^{y,s}_\eps - \mathcal{A}^{y,s}_\eps})\theta^{x,t}_\delta(y,s)\,dyds. \\
     \end{split}
   \end{equation*}
Expliciting the term in the previous relation, we get
\begin{equation}\label{eq:definizioni}
  \begin{split}
   (\partial_t & f_{\eps,\delta}-\mathcal{S}_\eps^{x,t} f_{\eps,\delta} - \mathcal{A}_\eps^{x,t} f_{\eps,\delta})(x,t)
  \\
    =&\int_{\Rn} f_\eps(y,s) \overline{(-\partial_s -\mathcal{S}_\eps^{y,s} +\mathcal{A}_\eps^{y,s})  \theta_\delta^{x,t}(y,s)}\,dy ds\\
    &+\int_{\Rn} f_\eps(y,s) [(\dot a_\eps(s)+4ia_\eps^2(s))\abs{y+\mathbf{b}(s)}^2-(\dot a_\eps(t)+4ia_\eps^2(t))\abs{x+\mathbf{b}(t)}^2]
    \theta_\delta^{x,t}(y,s)\,dyds\\
    &+\int_{\Rn} f_\eps(y,s) [2a_\eps(s)\mathbf{\dot b}(s)\cdot (y+\mathbf{b}(s))-2a_\eps(t)\mathbf{\dot b}(t)\cdot (x+\mathbf{b}(t))] 
    \theta_\delta^{x,t}(y,s)\,dyds\\
    &+4i \int_{\Rn} f_\eps(y,s) [a_\eps(s)(y+\mathbf{b}(s))\cdot \overline{\nabla_{A,y}}
    +a_\eps(t)(x+\mathbf{b}(t))\cdot \nabla_{A,x}]  \theta_\delta^{x,t}(y,s)\,dyds\\
    &+ \int_{\Rn} 2in f_\eps(y,s) [ a_\eps(s)+a_\eps(t)] \theta_\delta^{x,t}(y,s)\,dyds \\
    &-i \int_{\Rn} f_\eps(y,s)[\Delta_{A,x}-\overline{\Delta_{A,y}}] \theta_\delta^{x,t}(y,s)\,dyds
    =:I+II+II+IV+V+VI. \\
  \end{split}
\end{equation}
Since $\nabla_x \theta^{x,t}_\delta(y,s)=-\nabla_y \theta^{x,t}_\delta(y,s)$, we have
  \begin{equation}\label{eq:IV}
  \begin{split}
        IV = &
    4i\int_{\Rn} f_\eps(y,s) [a_\eps(s)(y+\mathbf{b}(s))-a_\eps(t)(x+\mathbf{b}(t))]\cdot \nabla_{y}  \theta_\delta^{x,t}(y,s)]\,dyds
    \\
    &+ 4 \int_{\Rn} f_\eps(y,s) [- a_\eps(s)(y+\mathbf{b}(s))\cdot A(y) +  a_\eps(t) (x+\mathbf{b}(t))\cdot A(x) ]\theta^{x,t}_\delta (y,s) \,dyds.
    \end{split}
  \end{equation}
Moreover, recalling that
\begin{equation*}
\Delta_A f= \nabla_A^2 f=\Delta f-i(\nabla\cdot A)f -2i A\cdot \nabla f -\abs{A}^2 f,
\end{equation*}
and $ \Delta_{y} \theta_\delta^{x,t}(y,s) = \Delta_{x} \theta_\delta^{x,t}(y,s)$,
we obtain 
\begin{equation}\label{eq:VI}
  \begin{split}
    VI= & 
       \int_{\Rn} f_\eps(y,s)\Bigl[- (\nabla_x\cdot A(x)+\nabla_y\cdot A(y))
      +2 (A(x)- A(y))\cdot \nabla_y \Bigr.
      \\ &
      \Bigl.\ \ \ +i(\abs{A(x)}^2-\abs{A(y)}^2)\Bigr]\theta^{x,t}_\delta(y,s)\,dyds
  \end{split}
\end{equation}
By \eqref{eq:definizioni}, \eqref{eq:IV}, \eqref{eq:VI} we can hence write
\begin{equation}\label{eq:errori}
    (\partial_t f_{\eps,\delta}-\mathcal{S}_\eps^{x,t} f_{\eps,\delta} - \mathcal{A}_\eps^{x,t} f_{\eps,\delta})(x,t) = i (V f_\eps)\ast \theta_\delta(x,t)+A_{\eps,\delta}(x,t)+B_{\eps,\delta}(x,t),
\end{equation}
where
\begin{equation*}
  \begin{split}
    A_{\eps,\delta}&(x,t) \\
    =&\int_{\Rn} f_\eps(y,s) [(\dot a_\eps(s)+4ia_\eps^2(s))\abs{y+\mathbf{b}(s)}^2-(\dot a_\eps(t)+4ia_\eps^2(t))\abs{x+\mathbf{b}(t)}^2]
    \theta_\delta^{x,t}(y,s)\,dyds\\
    &+\int_{\Rn} f_\eps(y,s) [2a_\eps(s)\mathbf{\dot b}(s)\cdot (y+\mathbf{b}(s))-2a_\eps(t)\mathbf{\dot b}(t)\cdot (x+\mathbf{b}(t))] 
    \theta_\delta^{x,t}(y,s)\,dyds \\
    &+ 4 \int_{\Rn} f_\eps(y,s) [ a_\eps(t) (x+\mathbf{b}(t))\cdot A(x) - a_\eps(s)(y+\mathbf{b}(s))\cdot A(y) ]
    \theta^{x,t}_\delta (y,s) \,dyds  \\
    & +i \int_{\Rn} f_\eps(y,s) [\abs{A(x)}^2-\abs{A(y)}^2]
    \theta^{x,t}_\delta (y,s) \,dyds,    
  \end{split}
\end{equation*}
and
\begin{equation*}
  \begin{split}
    &B_{\eps,\delta}(x,t) \\
    &\,= 4i\int_{\Rn} f_\eps(y,s) [4i(a_\eps(s)(y+\mathbf{b}(s))-a_\eps(t)(x+\mathbf{b}(t)))+ 2 (A(x)-A(y))]\cdot \nabla_{y} 
    \theta_\delta^{x,t}(y,s)]\,dyds\\
    &\quad + \int_{\Rn}  f_\eps(y,s) [2in( a_\eps(s)+a_\eps(t)) - (\nabla_x\cdot A(x)+\nabla_y \cdot A(y))] \theta_\delta^{x,t}(y,s)\,dyds .
  \end{split}
\end{equation*}
Since $a_\eps$, $\mathbf b$ are smooth, and $A\in \mathcal C^{1,\varepsilon}_{\text{loc}}$, there is a $N_{a,\mathbf{b},A,\eps}>0$ such that
\begin{align}
  \norma{A_{\eps,\delta}}_{L^2(\Rn\times[-1+\delta,1-\delta])}\leq & \,
  \delta N_{a,\mathbf{b},A,\eps}\sup_{t\in[-1,1]}\norma{e^{(a(t)-\frac{\eps}{2})\abs{x}^2}u(t)},
  \label{eq:error1}\\
  \norma{B_{\eps,\delta}}_{L^2(\Rn\times[-1+\delta,1-\delta])}\leq & \, N_{a,\mathbf{b},A,\eps}\sup_{t\in[-1,1]}\norma{e^{(a(t)-\frac{\eps}{2})\abs{x}^2}u(t)}.
  \label{eq:error2}
\end{align}
Moreover
\begin{equation}\label{eq:errorV}
  \sup_{t\in[-1,1]}\norma{(V f_\eps)\ast\theta_\delta(t)}\leq \norma{V}_{L^\infty(\Rn\times[-1,1])}\sup_{t\in[-1,1]}\norma{e^{(a(t)-\frac{\eps}{2})\abs{x}^2}u(t)}.
\end{equation}
The function $g_{\eps,\delta}$ verifies analogous relations, obtained setting $\mathbf{b}\equiv 0$ in the previous equations.

Since the $f_{\eps,\delta}$ and $g_{\eps,\delta}$ are now regular, \eqref{eq:astrattoutile} holds. Therefore,
\begin{equation}\label{eq:serve1}
  \begin{split}
      \partial_t&\left(\frac{1}{a_\eps} \, \partial_t H_{\eps,\delta} 
        -\frac{1}{a_\eps} \, \Re(\partial_t g_{\eps,\delta} -
      \mathcal{S_\eps} g_{\eps,\delta} 
      - \mathcal{A_\eps}g_{\eps,\delta},g_{\eps,\delta})\right) \\
      & + \frac{1}{a_\eps} \, \norma{\partial_t g_{\eps,\delta}
        -\mathcal{S_\eps}g_{\eps,\delta}-\mathcal{A_\eps}g_{\eps,\delta}}^2\\ 
      \geq& \, 2 \left(\frac{1}{a_\eps}  \mathcal{S_\eps}_t g_{\eps,\delta}
       + \frac{1}{a_\eps} \, [\mathcal{S_\eps},\mathcal{A_\eps}]g_{\eps,\delta} 
       - \frac{\dot a_\eps}{a_\eps^2}  \,
      \mathcal{S_\eps}g_{\eps,\delta},g_{\eps,\delta}\right).
    \end{split}
\end{equation}
Moreover, from \eqref{eq:richiamero} in Lemma \ref{lemma3}, with $\gamma=1/a_{\eps}$ and $\mathbf{b}\equiv 0$, we get
\begin{equation}\label{eq:serve2}
    \begin{split} 
      \int_{\Rn} & \left( \frac{1}{a_\eps} \mathcal{S_{\eps}}_t + \frac{1}{a_\eps} [\mathcal{S_\eps},\mathcal{A_\eps}]
      -\frac{\dot a_\eps}{a_\eps^2} \mathcal{S_\eps} \right) g_{\eps,\delta} \bar g_{\eps,\delta}\, dx  \\
                 =& \int_{\Rn} 8 \left\vert -i\nabla_A g_{\eps,\delta} 
                   + \left(\frac{\dot a_\eps }{4a_\eps}\right)xg_{\eps,\delta} \right\vert ^2\,dx 
                  +\int_{\Rn}F(a_\eps)\abs{x}^2\abs{g_{\eps,\delta}}^2\,dx\\ 
                  &-8 \int_{\Rn}\Im(g_{\eps,\delta} \, x^t B \overline{\nabla_A g_{\eps,\delta}})\,dx.
    \end{split}
  \end{equation}
Since $F(a_\eps)>0$,
 there exists a constant $C>0$ depending on $a$, such that we have

\begin{equation}\label{eq:dariusareallafine}
  \begin{split}
    & \int_{\Rn} 8 \left\vert -i\nabla_A g_{\eps,\delta} 
      + \left(\frac{\dot a_\eps }{4a_\eps}\right)xg_{\eps,\delta} \right\vert ^2\,dx 
    +\int_{\Rn}F(a_\eps)\abs{x}^2\abs{g_{\eps,\delta}}^2\,dx  \\
    &\geq \int_{\Rn} \left\vert \nabla_A \left(e^{\frac{i \dot a_\eps }{8 a_\eps}\abs{x}^2}g_{\eps,\delta}\right)\right\vert^2\,dx
    +C_a\int_{\Rn} \abs{\nabla_A g_{\eps,\delta}}^2 + \abs{x}^2\abs{g_{\eps,\delta}}^2\,dx. \\
  \end{split}
\end{equation}
Moreover there exists an arbitrarily small $\eta>0$ such that
\begin{equation}\label{eq:serve3}
    -8 \int_{\Rn}\Im(g_{\eps,\delta} \, x^t B \overline{\nabla_A g_{\eps,\delta}})\,dx \geq 
     -\frac{16}{\eta}\norma{x^t B}_{L^\infty}^2\int_{\Rn}\abs{g_{\eps,\delta}}^2\,dx 
    - \eta \int_{\Rn}\abs{\nabla_A g_{\eps,\delta}}^2\,dx.
\end{equation}
By \eqref{eq:serve1},  \eqref{eq:serve2}, \eqref{eq:dariusareallafine},  \eqref{eq:serve3}, we get
\begin{equation}\label{eq:dariusareallafine}
  \begin{split}
    &\int_{\Rn} \left\vert \nabla_A \left(e^{\frac{i \dot a_\eps }{8 a_\eps}\abs{x}^2}g_{\eps,\delta}\right)\right\vert^2\,dx+
    C\int_{\Rn} \abs{\nabla_A g_{\eps,\delta}}^2 + \abs{x}^2\abs{g_{\eps,\delta}}^2\,dx \\
    & \ \ \ \leq 
      \partial_t\left(\frac{1}{a_\eps} \, \partial_t H_{\eps,\delta} 
        -\frac{1}{a_\eps} \, \Re(\partial_t g_{\eps,\delta} -
      \mathcal{S_\eps} g_{\eps,\delta} 
      - \mathcal{A_\eps}g_{\eps,\delta},g_{\eps,\delta})\right) \\
      & \ \ \ \ \ \ + \frac{1}{a_\eps} \, \norma{\partial_t g_{\eps,\delta}
        -\mathcal{S_\eps}g_{\eps,\delta}-\mathcal{A_\eps}g_{\eps,\delta}}^2 
      + D \norma{x^t B}_{L^\infty}^2 H_{\eps,\delta},
  \end{split}
\end{equation}
for some constants $C,D>0$ depending on $a$.
Multiplying the last inequality by $(1-\delta_\eps)^2 - t^2$, and integrating by part in time,
we get
\begin{equation*}
  \norma{\sqrt{(1-\delta_\eps)^2-t^2} \nabla_A g_{\eps,\delta}}_{L^2(\Rn\times [-1+\delta_\eps,1-\delta_\eps])} \leq N_{a,B,\eps},
\end{equation*}
and analogously 
\begin{equation*}
  \norma{\sqrt{(1-\delta_\eps)^2 - t^2} \nabla_A f_{\eps,\delta}}_{L^2(\Rn\times [-1+\delta_\eps,1-\delta_\eps])} \leq N_{a,\mathbf{b},B,\eps},
\end{equation*}
thanks to \eqref{eq:errori}, \eqref{eq:error1}, and \eqref{eq:error2}.
Letting $\delta$ tend to zero, we find that
\begin{equation*}
  \norma{\sqrt{(1-\delta_\eps)^2-t^2} \nabla_A f_{\eps}}_{L^2(\Rn\times [-1+\delta_\eps,1-\delta_\eps])} \leq N_{a,\mathbf{b},B,\eps},
\end{equation*}
which makes possibile to integrate in time by parts the first term in $B_{\eps,\delta}$, obtaining
\begin{equation*}
  \begin{split}
    B_{\eps,\delta}(x,t)=&-\int_{\Rn} \nabla_y f_\eps (y,s) \cdot [4i(a_\eps(s)(y+\mathbf{b}(s))-a_\eps(t)(x+\mathbf{b}(t)))
    ]\theta^{x,t}_\delta(y,s)\,dyds \\
    &-\int_{\Rn} \nabla_y f_\eps (y,s) \cdot [2 (A(x)-A(y))]\theta^{x,t}_\delta(y,s)\,dyds  \\
    & +\int_{\Rn} f_\eps (y,s) [2in(a_\eps(t)-a_\eps(s))+(\nabla_y\cdot A(y)-\nabla_x\cdot A(x))]
    \theta^{x,t}_\delta(y,s)\,dyds.
  \end{split}
\end{equation*}
This, together with the fact that $A\in \mathcal C^{1,\varepsilon}_{\text{loc}}$, allows to get finally
\begin{equation}\label{eq:error3}
  \norma{B_{\eps,\delta}}_{L^2(\Rn,[-1+\delta_\eps,1-\delta_\eps])}\leq \delta N_{a,\mathbf{b},A,\eps},
\end{equation}
when $0<\delta\leq \delta_\eps$, which improves \eqref{eq:error2}.

Thanks to the above convergence results, we have that $f_\eps$ is in $C^{\infty}((-1,1),L^2(\Rn))$ and 
that $H_{\eps,\delta}$ converges uniformly on compact sets of $(-1,1)$ to $H_{\eps}(t)=\norma{f_\eps(t)}^2$.
Letting $\delta$ and $\eps$ tend to zero, we get finally
\begin{equation*}
  \norma{e^{a(t)\abs{x+\mathbf{b}(t)}^2}u(t)}^2 
\leq \sup_{t \in [-1,1]} 
\norma{e^{\mu\abs{x}^2}u(t)}e^{2T(t)+M(t)+4\norma{V}_{L^\infty}}
\end{equation*}
when $\abs{t}\leq 1$, with
\begin{equation*}
  \begin{sistema}
    \partial_t \left(\frac{1}{a}\partial_t M\right)= -\frac{1}{a}\norma{V}_{L^\infty}^2 \\
    M(-1)=M(1)=0.
  \end{sistema}
\end{equation*}
Notice that $M$ is even, and
\begin{equation*}
  M(t)=\norma{V}_{L^\infty}^2\int_t^1\int_0^s \frac{a(s)}{a(\tau)}\,d\tau ds, \quad \text{ in }[0,1],
\end{equation*}
and, since $a$ is monotone in $[0,1]$, we get the \eqref{eq:spostaregaussiana}.
Using again \eqref{eq:dariusareallafine}, analogously we have 
\begin{equation*}
\begin{split}
  &\norma{\sqrt{(1-\delta_\eps)^2-t^2}\nabla_A(e^{\frac{i\dot a _\eps}{8a_\eps}\abs{x}^2} 
    g_{\eps,\delta})}_{L^2(\Rn\times[-1+\delta_\eps,1-\delta_\eps])} \\
  &+C_a \norma{\sqrt{(1-\delta_\eps)^2-t^2}\nabla_A g_{\eps,\delta}}_{L^2(\Rn\times[-1+\delta_\eps,1-\delta_\eps])} \\
  &+C_a \norma{\sqrt{(1-\delta_\eps)^2-t^2}x g_{\eps,\delta}}_{L^2(\Rn\times[-1+\delta_\eps,1-\delta_\eps])} \\
  &\leq C e^{2\norma{V}_{L^\infty}+\frac{\norma{V}_{L^\infty}^2}{4}}\sup_{t \in [-1,1]} \norma{e^{\mu \abs{x}^2}u(t)}+ \delta N_{a,\eps, A, B},
\end{split}
\end{equation*}
for $C=C(\norma{V}_{\infty}, \norma{x^t B}_{\infty})$.
Letting $\delta$ and $\eps$ go to zero, we get \eqref{eq:disuguaglianzalemma4} and we conclude the proof.
\end{proof}

\section{Proof of Theorem \ref{thm:main}}\label{sec:main}
For convenience, we will denote by
\begin{align}
    \label{eq:assA3hardy} & M_B:=2\|x^tB\|_{L^\infty}^2 < +\infty,  \\
    \label{eq:assVhardy}  & M_V:=2\norma{V}_{L^\infty}+ \frac{\norma{V}^2}{4} < +\infty. 
\end{align}
The proof is divided into several steps.
  
\subsection{Cr\"onstrom gauge}\label{sec:cronstrom}
The first step consists in reducing to the Cr\"onostrom gauge 
\begin{equation*}
  x\cdot A(x)=0 \quad \text{ for all }x \in  \Rn,
\end{equation*} 
by means of the following result.

\begin{lemma}\label{lem:cronstrom1}
  Let $A=A(x)=(A^1(x),\dots,A^n(x)):\R^n\to\R^n$, for $n\geq2$
 and denote by $B=DA-DA^t\in \mathcal M_{n\times
n}(\R)$, $B_{jk}=A^k_j-A^j_k$, and $\Psi(x):=x^tB(x)\in\R^n$. 
Assume that the two vector quantities
  \begin{equation}\label{eq:cronstrom1}
    \int_0^1A(sx)\,ds\in\R^n,
    \qquad
    \int_0^1\Psi(sx)\,ds\in\R^n
  \end{equation}
  are finite, for almost every $x\in\R^n$; moreover, define the (scalar) function
  \begin{equation}\label{eq:varphi}
    \varphi(x):=x\cdot\int_0^1A(sx)\,ds\in\R.
  \end{equation}
  Then, the following two identities hold:
  \begin{align}\label{eq:cronstrom2}
    \widetilde A(x):=A(x)-\nabla\varphi(x)  & = -\int_0^1\Psi(sx)\,ds
    \\
    x^tD\widetilde A(x)  & = -\Psi(x) +\int_0^1\Psi(sx)\,ds.
    \label{eq:cronstrom3}
  \end{align}
\end{lemma}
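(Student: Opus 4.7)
The plan is to establish both identities by two applications of the fundamental theorem of calculus along radial rays $s\mapsto sx$, together with elementary index manipulations.

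For \eqref{eq:cronstrom2} I would first write, componentwise,
\[ A^k(x) = \int_0^1 \frac{d}{ds}\bigl[sA^k(sx)\bigr] ds = \int_0^1 A^k(sx)\, ds + \sum_j x_j \int_0^1 s\,(\partial_j A^k)(sx)\, ds, \]
and separately differentiate $\varphi(x)=\sum_j x_j\int_0^1 A^j(sx)\,ds$ under the integral sign (legitimate by the first part of \eqref{eq:cronstrom1}):
\[ \partial_k\varphi(x) = \int_0^1 A^k(sx)\, ds + \sum_j x_j \int_0^1 s\,(\partial_k A^j)(sx)\, ds. \]
Subtracting, the ``untwisted'' terms $\int_0^1 A^k(sx)\,ds$ cancel and the remainder regroups, via the definition of $B_{jk}$ and the identity $\Psi(sx)=s\,x^t B(sx)$, into $-\int_0^1 \Psi(sx)\,ds$, proving \eqref{eq:cronstrom2}. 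As a byproduct the Cronstrom property $x\cdot\tilde A\equiv0$ is immediate, from the antisymmetry of $B$.

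For \eqref{eq:cronstrom3} I would differentiate the representation of $\tilde A$ just obtained. Under the index convention consistent with $B=DA-DA^t$ used in the paper, the $k$-th component of $x^t D\tilde A$ is the directional derivative $(x\cdot\nabla)\tilde A^k$; applying the chain rule to $\Psi_k(sx)$ inside the integral yields
\[ (x\cdot\nabla)\tilde A^k(x) = -\int_0^1 s\,(x\cdot\nabla\Psi_k)(sx)\,ds. \]
A second application of the fundamental theorem of calculus, now to $s\,\Psi_k(sx)$, gives
\[ \Psi_k(x) = \int_0^1 \Psi_k(sx)\,ds + \int_0^1 s\,(x\cdot\nabla\Psi_k)(sx)\,ds, \]
and substitution delivers \eqref{eq:cronstrom3}. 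The second finiteness in \eqref{eq:cronstrom1} is invoked here to ensure that $\int_0^1\Psi(sx)\,ds$ is defined and that the interchange of $\partial_x$ with the radial integral is legitimate.

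I do not anticipate a genuine obstacle: the lemma is a textbook Poincar\'e (Cronstrom) gauge construction for $1$-forms, and every step is a direct computation. The only care required is to keep mutually consistent the sign conventions for $B=DA-DA^t$ and $\Psi=x^t B$ fixed in the paper, and to use the finiteness assumptions \eqref{eq:cronstrom1} to justify the two differentiations under the integral sign.
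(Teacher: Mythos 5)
The paper does not in fact prove this lemma: Remark~\ref{rem:cronstrom} refers the reader to Iwatsuka~\cite{I} and to Lemma~2.2 of~\cite{BFGRV}. Your proof supplies the underlying argument, and the method you chose --- two applications of the fundamental theorem of calculus along the rays $s\mapsto sx$ together with the index identity $B_{jk}=\partial_j A^k-\partial_k A^j$ --- is exactly the classical Poincar\'e/Cr\"onstr\"om gauge construction that is used in those references. Your derivation of~\eqref{eq:cronstrom3} from~\eqref{eq:cronstrom2} is correct, and the remark that $x\cdot\widetilde A\equiv0$ follows from the antisymmetry of $B$ is right and is the property that actually matters downstream.

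There is, however, one point you pass over too quickly: the sign in~\eqref{eq:cronstrom2}. Carrying out your own subtraction literally, with the paper's convention $B_{jk}=A^k_j-A^j_k=\partial_j A^k-\partial_k A^j$ and with $\Psi_k(y)=\sum_j y_j B_{jk}(y)$ (the reading of $x^tB$ that is used elsewhere in the paper, e.g.\ in Lemma~\ref{lemma3}), one gets
\[
\widetilde A^k(x)=\sum_j x_j\int_0^1 s\,B_{jk}(sx)\,ds=\int_0^1 \Psi_k(sx)\,ds,
\]
i.e.\ the sign is $+$, not $-$. (And correspondingly $(x\cdot\nabla)\widetilde A^k=\Psi_k(x)-\int_0^1\Psi_k(sx)\,ds$.) You assert that the regrouping yields $-\int_0^1\Psi(sx)\,ds$; that does not follow from the computation you set up unless one instead interprets $\Psi$ as $Bx$ (equivalently, reverses the sign convention for $B$). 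So either the statement of the lemma carries a sign typo, or the convention for $\Psi$ implicit in it differs from the one used in Lemma~\ref{lemma3}; in a careful write-up you should flag this, rather than silently land on the stated sign. None of this affects what the lemma is used for --- the relations $x\cdot\widetilde A\equiv0$, $x\cdot(x^tD\widetilde A)\equiv0$ in~\eqref{eq:gaugefinal} and the bound $\norma{\widetilde A}_{L^\infty}\leq\norma{x^tB}_{L^\infty}$ hold with either sign --- but it is a genuine bookkeeping error as written. One further small remark: finiteness of the two integrals in~\eqref{eq:cronstrom1} is not by itself a licence to differentiate under the integral sign; what actually justifies the two differentiations is the regularity $A\in\mathcal C^{1,\varepsilon}_{\mathrm{loc}}$ assumed in Theorem~\ref{thm:main}, as Remark~\ref{rem:cronstrom} points out.
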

\begin{remark}\label{rem:cronstrom}
Notice that
  \begin{equation}\label{eq:gaugefinal}
    x\cdot\widetilde A(x) \equiv0,
    \qquad
    x\cdot x^tD\widetilde A(x)\equiv0.
  \end{equation}
  From now on, we will hence assume, without loss of generality, that \eqref{eq:gaugefinal} are satisfied by $A$. Observe moreover that assumption \eqref{hypo:xi} in Theorem \ref{thm:main} is preserved by the above gauge transformation, and we have in addition that $A\cdot\xi\equiv0$. 
We also remark that
\begin{equation*} 
  \norma{\tilde A}_{L^\infty}^2 +   \norma{x^t B}_{L^\infty}^2 \leq  M_B. 
\end{equation*}
Finally notice that the first condition in \eqref{eq:cronstrom1} is guaranteed by the assumption $A\in \mathcal C^{1,\varepsilon}_{\text{loc}}$ in Theorem \ref{thm:main}.

We mention \cite{I} for the proof of the previous Lemma; see alternatively Lemma 2.2 in \cite{BFGRV}.
\end{remark}

\subsection{Appell Transformation}\label{sec:Appell}
Following the strategy in \cite{EKPV2, EKPV3, BFGRV}, the second step is to reduce assumption \eqref{eq:decay3hardy} to the case $\alpha=\beta$, by pseudoconformal transformation (Appell transformation).
\begin{lemma}[\cite{BFGRV}, Lemma 2.7]\label{lem:appell}
  Let $A=A(y,s)=(A^1(y,s),\dots,A^n(y,s)):\R^{n+1}\to\R^{n}$,
\mbox{$V=V(y,s)$}, $F=F(y,s):\R^n\to\C$, $u=u(y,s):\R^{n}\times[0,1]\to\C$ be a solution to
  \begin{equation}\label{eq:1appell}
  \partial_su=i\left(\Delta_Au+V(y,s)u+F(y,s)\right),
\end{equation}
and define, for any $\alpha,\beta>0$, the function
\begin{equation}\label{eq:appell}
  \widetilde u(x,t):=
  \left(\frac{\sqrt{\alpha\beta}}{\alpha(1-t)+\beta t}\right)^{\frac n2}
  u\left(\frac{x\sqrt{\alpha\beta}}{\alpha(1-t)+\beta t},\frac{t\beta}{\alpha(1-t)+\beta t}\right)
  e^{\frac{(\alpha-\beta)|x|^2}{4i (\alpha(1-t)+\beta t)}}.
\end{equation}
Then $\widetilde u$ is a solution to
\begin{equation}\label{eq:2appell}
  \partial_t\widetilde u=i\left(\Delta_{\widetilde A}\widetilde u
  +\frac{(\alpha-\beta)\widetilde A\cdot x}
  {(\alpha(1-t)+\beta t)}\widetilde u+
  \widetilde V(x,t)\widetilde u+\widetilde F(x,t)\right),
\end{equation}
where
\begin{align}
  \label{eq:Aappell}
  \widetilde A(x,t)
  &
  = \frac{\sqrt{\alpha\beta}}{\alpha(1-t)+\beta t}
  A\left(\frac{x\sqrt{\alpha\beta}}{\alpha(1-t)+\beta t},\frac{t\beta}{\alpha(1-t)+\beta t}\right)
  \\
  \label{eq:Vappell}
  \widetilde V(x,t)
  &
  = \frac{\alpha\beta}{(\alpha(1-t)+\beta t)^2}
  V\left(\frac{x\sqrt{\alpha\beta}}{\alpha(1-t)+\beta t},\frac{t\beta}{\alpha(1-t)+\beta t}\right)
  \\
  \label{eq:Fappell}
  \widetilde F(x,t)
  &
  = \left(\frac{\sqrt{\alpha\beta}}{\alpha(1-t)+\beta t}\right)^{\frac n2+2}
  F\left(\frac{x\sqrt{\alpha\beta}}{\alpha(1-t)+\beta t},\frac{t\beta}{\alpha(1-t)+\beta t}\right)
  e^{\frac{(\alpha-\beta)|x|^2}{4i
  (\alpha(1-t)+\beta t)}}.
\end{align}
\end{lemma}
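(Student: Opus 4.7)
The plan is to verify the lemma by direct computation via the chain rule: compute $\partial_t \widetilde u$ and $\Delta_{\widetilde A}\widetilde u$, substitute $\partial_s u$ from \eqref{eq:1appell}, and check that \eqref{eq:2appell} holds identically. To organize the bookkeeping I adopt the shorthand $\lambda(t) := \alpha(1-t) + \beta t$, $\mu := \sqrt{\alpha\beta}$, $\eta(t) := (\mu/\lambda)^{n/2}$, and $\phi(x,t) := \frac{(\alpha-\beta)|x|^2}{4\lambda(t)}$, so that $\widetilde u(x,t) = \eta(t)\, u(y,s)\, e^{-i\phi(x,t)}$ with $y = \mu x/\lambda$ and $s = \beta t/\lambda$. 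Elementary calculus yields the five identities
\begin{equation*}
\partial_t s = \frac{\mu^2}{\lambda^2},\quad \partial_t y = -\frac{(\beta-\alpha)y}{\lambda},\quad \partial_{x_j} y_k = \frac{\mu}{\lambda}\delta_{jk},\quad \nabla\phi = \frac{(\alpha-\beta)x}{2\lambda},\quad \partial_t\phi = \frac{(\alpha-\beta)^2|x|^2}{4\lambda^2},
\end{equation*}
which drive the entire verification.

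Since by definition $\widetilde A(x,t) = (\mu/\lambda) A(y,s)$, the chain rule gives $\nabla_x\cdot\widetilde A = (\mu/\lambda)^2(\nabla_y\cdot A)$ and $|\widetilde A|^2 = (\mu/\lambda)^2 |A|^2$. Expanding $\Delta_{\widetilde A} = \Delta - 2i\widetilde A\cdot\nabla - i(\nabla\cdot\widetilde A) - |\widetilde A|^2$ and applying it to the product $\eta e^{-i\phi} u(y,s)$ yields
\begin{equation*}
\Delta_{\widetilde A}\widetilde u = \eta e^{-i\phi}\left[\left(\tfrac{\mu}{\lambda}\right)^{\!2}\!\Delta_A u - 2i\tfrac{\mu}{\lambda}(\nabla\phi)\cdot\nabla_y u - 2(\widetilde A\cdot\nabla\phi) u - i(\Delta\phi) u - |\nabla\phi|^2 u\right].
\end{equation*}
On the other side, $\partial_t \widetilde u = \widetilde u(\dot\eta/\eta - i\partial_t\phi) + \eta e^{-i\phi}[(\mu^2/\lambda^2)\partial_s u + (\partial_t y)\cdot\nabla_y u]$; substituting $\partial_s u = i(\Delta_A u + V u + F)$ produces the rescaled kinetic, potential and forcing contributions, which match the coefficients $\widetilde V = (\mu^2/\lambda^2) V$ and $\widetilde F = \eta e^{-i\phi}(\mu^2/\lambda^2) F$ of \eqref{eq:Vappell}, \eqref{eq:Fappell}. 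Collecting the surviving pieces reduces \eqref{eq:2appell} to four algebraic conditions: (i) coefficient of $\nabla_y u$: $\partial_t y = 2(\mu/\lambda)\nabla\phi$; (ii) real $u$-coefficient: $\dot\eta/\eta = \Delta\phi = n(\alpha-\beta)/(2\lambda)$; (iii) imaginary $u$-coefficient: $\partial_t\phi = |\nabla\phi|^2$; (iv) the residual $-2i(\widetilde A\cdot\nabla\phi)\widetilde u = -i\frac{(\alpha-\beta)\widetilde A\cdot x}{\lambda}\widetilde u$ is exactly cancelled by the extra term $+i\frac{(\alpha-\beta)\widetilde A\cdot x}{\lambda}\widetilde u$ of \eqref{eq:2appell}. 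Each of (i)--(iv) is an immediate consequence of the identities in the first display.

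The main obstacle is bookkeeping rather than conceptual: one must carefully track all the cross terms generated by the phase, the rescaled variables, and the magnetic potential, and verify that the unique piece not absorbable by a pure change of variables is precisely the residual $-2i\widetilde A\cdot\nabla\phi$, namely the new coupling in \eqref{eq:2appell}. In the magnetic-free case $A\equiv 0$ this residue vanishes and one recovers the classical pseudoconformal Appell transform; the novelty here is the interaction between the asymmetry $\alpha\neq\beta$ and the magnetic potential, producing a first-order scalar perturbation $\frac{(\alpha-\beta)\widetilde A\cdot x}{\lambda}\widetilde u$ that cannot be removed by any gauge choice.
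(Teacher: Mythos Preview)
Your proof is correct: the direct chain-rule verification is exactly how such a pseudoconformal identity is established, and your four algebraic checks (i)--(iv) are each valid. Note that the paper itself does not prove this lemma but simply quotes it from \cite{BFGRV}, Lemma~2.7, where the argument is likewise a straightforward computation; your proposal thus supplies what the paper omits. One minor quibble: your closing remark that the extra term $\frac{(\alpha-\beta)\widetilde A\cdot x}{\lambda}\widetilde u$ ``cannot be removed by any gauge choice'' overstates the situation---as the paper observes in Remark~\ref{rem:appell}, in the Cr\"onstrom gauge $x\cdot A\equiv 0$ this term vanishes identically, which is precisely why that gauge is adopted in Section~\ref{sec:cronstrom}.
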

\begin{remark}\label{rem:appell}
  The term containing $ \tilde A \cdot x$ vanishes (see Remark \ref{rem:cronstrom} above).
Moreover,
  assumptions \eqref{eq:assA3hardy} and \eqref{eq:assVhardy} 
  still hold for $\tilde B$ and $\tilde V$.  We finally remark that $\widetilde A$ is time-dependent. Nevertheless, notice that 
  \begin{equation}\label{hypo:grazieaCronstrom}
    x\cdot \widetilde A_t(x)= 0, \quad \xi \cdot \tilde A_t(x) = 0,
  \end{equation}  
for all $x \in \Rn, t \in [0,1]$.
\end{remark}
By direct computations, we have
  \begin{gather*}
    \left\Vert{e^{\frac{\abs{\cdot}^2}{\alpha\beta}}\widetilde u(0)}
    \right\Vert_{L^2}=\left\Vert{e^{\frac{\abs{\cdot}^2}{\beta^2}}u(0)}\right\Vert_{L^2},
     \quad \left\Vert{e^{\frac{\abs{\cdot}^2}{\alpha\beta}}\widetilde
        u(1)}
    \right\Vert_{L^2}=\left\Vert{e^{\frac{\abs{\cdot}^2}{\alpha^2}}u(1)}\right\Vert_{L^2},
    \\ 
    \sup_{t \in[0,1]}
    \left\Vert{e^{\frac{\abs{\cdot}^2}{\alpha\beta}}}\widetilde u(t)\right\Vert_{L^2}=\sup_{t \in[0,1]}
    \left\Vert{e^{\frac{\abs{\cdot}^2}{(\alpha t +
            \beta(1-t))^2}}u(t)}\right\Vert_{L^2}.
  \end{gather*} 

For convenience, we change the time interval in $[-1,1]$: let $v(x,t)= 2^{-\frac{n}{4}}\widetilde u
\big(\frac{x}{\sqrt{2}},\frac{1+t}{2}\big)$. 
The function $v$ is a solution to
\begin{equation*}
  \partial_t v = i(\Delta_{\mathcal A}v + \mathcal{V}v),\quad \text{ in }\Rn \times [-1,1],
\end{equation*} 
with 
\begin{equation*}
  \mathcal A (x,t)= \frac{1}{\sqrt{2}}A\left(\frac{x}{\sqrt{2}},\frac{1+t}{2}\right), 
  \quad \mathcal V (x,t)= \frac{1}{2}V\left(\frac{x}{\sqrt{2}},\frac{1+t}{2}\right).
\end{equation*}
The assumptions of Theorem \ref{thm:main} still hold (up to a change of the constants) and moreover
\begin{gather*}
    \left\Vert{e^{\frac{\abs{\cdot}^2}{2\alpha\beta}}v(0)}
    \right\Vert_{L^2}=\left\Vert{e^{\frac{\abs{\cdot}^2}{\alpha\beta}}\widetilde u(0)}
    \right\Vert_{L^2}=\left\Vert{e^{\frac{\abs{\cdot}^2}{\beta^2}}u(0)}\right\Vert_{L^2}, \\
    \left\Vert{e^{\frac{\abs{\cdot}^2}{2\alpha\beta}}v(1)}
    \right\Vert_{L^2}=\left\Vert{e^{\frac{\abs{\cdot}^2}{\alpha\beta}}\widetilde u(1)}
    \right\Vert_{L^2}=\left\Vert{e^{\frac{\abs{\cdot}^2}{\alpha^2}}u(1)}\right\Vert_{L^2},
    \\ 
    \sup_{t \in[-1,1]}
    \left\Vert{e^{\frac{\abs{\cdot}^2}{2\alpha\beta}}}v(t)\right\Vert_{L^2}=\sup_{t \in[0,1]}
    \left\Vert{e^{\frac{\abs{\cdot}^2}{\alpha\beta}}}\widetilde u(t)\right\Vert_{L^2}=\sup_{t \in[0,1]}
    \left\Vert{e^{\frac{\abs{\cdot}^2}{(\alpha t +
            \beta(1-t))^2}}u(t)}\right\Vert_{L^2}.
\end{gather*} 
We set
\begin{equation}\label{nualfabeta}
  \mu:=\frac{1}{2\alpha\beta}.
\end{equation}
The basic ingredient of our proof is the following logarithmic convexity estimate:
\begin{gather}\label{eq:convessitalogaritmica}
    \sup_{t \in[-1,1]}    \left\Vert e^{\mu\abs{\cdot}^2}v(t)\right\Vert_{L^2(\Rn)}=\sup_{t \in[0,1]}
    \left\Vert{e^{\frac{\abs{\cdot}^2}{(\alpha t +
            \beta(1-t))^2}}u(t)}\right\Vert_{L^2(\Rn)}  \\
   \leq  C \sup_{t\in[0,1]}\left\Vert e^{\frac{\abs{\cdot}^2}{\beta^2}}u(\cdot,0)\right\Vert^{\frac{\beta (1-t)}{\alpha t +\beta(1-t)}}_{L^2}
    \left\Vert e^{\frac{\abs{\cdot}^2}{\alpha^2}}u(\cdot,1)\right \Vert_{L^2}^{\frac{\alpha t }{\alpha t + \beta (1-t)}} \notag \\
    \leq C \left(\left\Vert e^{\frac{\abs{\cdot}^2}{\beta^2}}u(\cdot,0)\right\Vert_{L^2} +
    \left\Vert e^{\frac{\abs{\cdot}^2}{\alpha^2}}u(\cdot,1)\right \Vert_{L^2}\right)<+\infty, \notag 
\end{gather}
with 
\begin{equation*}
  C=C\left(\alpha,\beta,\norma{x^t B}_{L^\infty},\norma{V_1}_{L^{\infty}},
    \sup_{t\in[0,1]} \left\| e^{\frac{|\cdot|^2}{(\alpha t+\beta(1-t))^2}}V_2(\cdot,t)\right\|_{L^\infty}
    e^{\sup_{t\in[0,1]}\left\|\Im V_2(\cdot,t)\right\| _{L^\infty}}\right).
\end{equation*}
For the proof of \eqref{eq:convessitalogaritmica} see Theorem 1.5 in \cite{BFGRV}.
From now on, we denote $v$, $\mathcal A$ and $\mathcal V$ by $u$, $A$ and $V$.

We follow the same strategy as in \cite{EKPV3}, which is based on an iteration scheme. The argument here is a bit more delicate, due to the presence of additional terms involving the magnetic field.

\subsection{Conclusion of the Proof}\label{sec:conclusion}
We now apply an iteration scheme which is completely analogous to the one performed in \cite{EKPV3}.
The idea is to get the best possible choice for $a(t)$ such that an estimate like
\begin{equation}
  \label{eq:stimapesata}
  \norma{e^{a(t)\abs{x}^2}u(x,t)}_{L^\infty_t([-1,1])L^2_x(\Rn)}\leq
  C(\alpha,\beta,T,\norma{V}_{L^\infty},M_B).
\end{equation} 
holds. In order to do this, we will construct $a$ as the limit of an appropriate sequence $a_j(t)$, having in mind the improvement result of Lemma \ref{lem:lemma4}. At each step of the procedure, assumptions \eqref{eq:check1} and \eqref{eq:check2} have to be checked.  Also the curve $\mathbf b(t)=b(t)\xi$, with $\xi\in\mathbb S^{n-1}$ as in \eqref{hypo:xi} is naturally involved in the following argument.

\subsubsection{\underline{Iteration scheme}} 
Let us first construct the iteration scheme.
Assume that $k$ even and smooth functions $a_j\colon [-1,1]\to
(0,+\infty)$ and $C_{a_j}>0$, $j=1,\dots,k$ have been generated, such that
\begin{equation}\label{eq:iteractionrequest}
\begin{cases}
  \mu\equiv a_1< a_2 < \dots <a_k \quad \text{ in }(-1,1), 
  \\ 
  \dot a_j \leq 0 \, \text{ in }[0,1], \quad F(a_j)>0 \, \text{ in }[-1,1], 
  \quad a_j(\pm 1)=\mu,
  \\
  \sup_{t \in[-1,1]}\norma{e^{a_j(t)\abs{\cdot}^2}u(\cdot,t)}\leq e^{M_B \int_0^1 s a_j(s)\,ds} 
  e^{M_V}\sup_{t \in[-1,1]}\norma{e^{\mu\abs{\cdot}^2}u(\cdot,t)},
  \\
   \norma{\sqrt{1-t^2}\nabla_A(e^{a_j+\frac{i\dot a_j}{8
          a_j}\abs{x}^2}u)}_{L^2(\Rn\times [-1,1])}
    + C_{a_j} \norma{\sqrt{1-t^2}e^{a_j(t)\abs{x}^2}\nabla_A u}_{L^2(\Rn\times [-1,1])} 
    \\
	\ \ \ \leq C e^{M_V}\sup_{t \in[-1,1]}\norma{e^{\mu\abs{\cdot}^2}u(\cdot,t)}, 
	\end{cases}
\end{equation} 
where $C=C(\norma{V}_{\infty},\norma{x^t B}_{\infty})>0$, for all $j=1,\dots,k$.

The construction is identical to the one in \cite{EKPV3}; we repeat it here for the sake of completeness.
In order to simplify notations, set $c_k:=a_k^{-\frac{1}{2}}$.  Let
$b_k\colon[-1,1] \to \R$ be the solution to
\begin{equation}\label{eq:ODEb}
  \begin{sistema} \ddot b_k=-\frac{F(a_k)}{a_k}=-2c_k(16 c_k^{-3}-\ddot
    c_k)
    \\ 
    b_k(\pm 1)=0.
  \end{sistema}
\end{equation}

Observe that $b_k$ is even and
\begin{equation}\label{eq:defnb} b_k(t)=\int_t^1 \int_0^s
  \frac{F(a_k(\tau))}{a_k(\tau)}\, d\tau ds \quad \text{ in }[-1,1];
\end{equation} 
moreover $\dot b_k<0$ in $(0,1]$. Apply now \eqref{eq:spostaregaussiana} in
Lemma \ref{lem:lemma4} with $a=a_k$ and $\bold b= b_k \eta$, for
$\eta\in \R \xi=\{p\xi\mid p \in \R\}$: we get
\begin{equation}\label{eq:dalLemma} \norma{e^{a_k(t)\abs{\, \cdot
        \,+b_k(t)\eta}^2}u(\cdot,t)}_{L^2(\Rn)}\leq e^{T_k(t)+M_V}\sup_{t \in[-1,1]}\norma{e^{\mu\abs{\cdot}^2}u(\cdot,t)}_{L^2(\Rn)},
\end{equation}
 with
\begin{equation*}
  \begin{sistema}
    \partial_t \left(\frac{1}{a}\partial_t
      T_k\right)=-\left(\frac{\abs{\ddot
          b_k}^2\abs{\eta}^2}{F(a_k)}+M_B\right)=-\left(\frac{
        F(a_k)\abs{\eta}^2}{a_k^2}+M_B \right) \quad \text{ in }[-1,1]
    \\
    T_k(\pm 1)=0.
  \end{sistema}
\end{equation*} 
$T_k$ is even and, remembering that
$a_k(s)\leq a_k(\tau)$ if $\tau\leq s$,
\begin{equation*}
  \begin{split} T_k(t)=&\int_t^1 \int_0^s \left(
      \frac{a_k(s)}{a_k(\tau)}\frac{
        F(a_k(\tau))\abs{\eta}^2}{a_k(\tau)}+a_k(s) M_B\right)\,d\tau ds 
    \\ 
    \leq & \, \abs{\eta}^2 \int_t^1 \int_0^s
    \frac{F(a_k(\tau))}{a_k(\tau)}\,d\tau ds + M_B\int_t^1 s a_k(s)\,ds
    \\
    = & \, b_k(t)\abs{\eta}^2 + M_B\int_t^1 s a_k(s)\,ds,
  \end{split}
\end{equation*}
 for $t \in (-1,1)$. Therefore the right hand side of
\eqref{eq:dalLemma} can be estimated as follows:
\begin{equation*}
    \int_{\Rn}  e^{2a_k(t)\abs{x+b_k(t)\eta}^2}\abs{u(t)}^2\,dx \leq 
      e^{b_k(t)\abs{\eta}^2}e^{ M_B \int_t^1 s a_k(s)\,ds}e^{M_V}
    \sup_{t \in[-1,1]}\norma{e^{\mu \abs{\cdot}^2}u(\cdot,t)}.
\end{equation*}
Consequently we obtain
\begin{equation}\label{eq:disuguaglianzachiave}
  \begin{split} 
    \int_{\Rn} & e^{2
      a_k(t)\abs{x}^2-2{\eta}^2b_k(t)(1-a_k(t)b_k(t))+
      4a_k(t)b_k(t)x \cdot \eta}\abs{u(t)}^2\,dx \\ 
    & \leq
    e^{ M_B \int_t^1 s a_k(s)\,ds}e^{M_V}
    \sup_{t \in[-1,1]}\norma{e^{\mu \abs{\cdot}^2}u(\cdot,t)}.
  \end{split}
\end{equation}

Notice that, since $a_k$ is continuous in $[-1,1]$,
we can estimate
\begin{equation*} e^{ M_B \int_t^1 s a_k(s)\,ds}\leq C_k<+\infty.
\end{equation*}

By \eqref{eq:disuguaglianzachiave}, the check to be performed is concerned with the sign of $1-a_k(0)b_k(0)$.

If $1-a_k(0)b_k(0)\leq 0$ then by \eqref{eq:disuguaglianzachiave} $u\equiv 0$ and the scheme stops. 

If $1-a_k(0)b_k(0)> 0$, then $1-a_k(t)b_k(t)>0$ for all $t\in [-1,1]$, because of the monotonicity
of $a_k$ and $b_k$. In this case, we define the $(k+1)-$th functions $a_{k+1}$ and $c_{k+1}$ as follows: 
\begin{equation}\label{eq:defnak1} a_{k+1}=\frac{a_k}{1-a_k b_k},
  \quad c_{k+1}=a_{k+1}^{-\frac{1}{2}}.
\end{equation} 
We prove that the new defined $a_{k+1}$ verifies the
requests \eqref{eq:iteractionrequest}. Indeed it is easily seen that
$a_{k+1}$ is even, $a_{k+1}(\pm)=\mu$, $a_k<a_{k+1}$ in $(-1,1)$,
$\dot a_{k+1}\leq 0$ in $[0,1]$. The proof that $F(a_{k+1})>0$ in
$[-1,1]$ deserves some comment: recall that
\begin{equation*} F(a_{k+1})=2c_{k+1}^{-1}(16c_{k+1}^{-3}-\ddot
  c_{k+1}),
\end{equation*} 
moreover, from \eqref{eq:defnak1},
\begin{equation*}
  \begin{split} 
    &c_{k+1}=(c_k^2-b_k)^{\frac{1}{2}},
    \\
    &\ddot c_{k+1}=c_{k+1}^{-3}\left(16 -\frac{\dot b_k^2}{4}+c_k \dot c_k \dot
      b_k - \dot c_k^2 b_k -16 c_k^{-2}b_k\right).
  \end{split}
\end{equation*}
 From \eqref{eq:iteractionrequest} and
\eqref{eq:defnb}, we get $\dot c_k \dot b_k \leq 0$ and $16 b_k
c_k^{-2}+b_k^2>0$ in $[-1,1]$, hence \mbox{$16 c_{k+1}^{-3}-\ddot
  c_{k+1} >0$}.  

Multiplying \eqref{eq:disuguaglianzachiave}
by $\exp(-2\eps b_k(t)\abs{\eta}^2)$, $\eps>0$ and integrating the
corresponding inequality on the line $\R\xi$, with respect to $\eta$, we get
\begin{equation}\label{eq:defnak1eps} 
\sup_{t \in[-1,1]}
  \norma{e^{a_{k+1}^\eps(t)\abs{\cdot}^2}u(\cdot,t)}\leq C_k (1+
  \eps^{-1})^{\frac{n}{4}}e^{M_V}
  \sup_{t \in[-1,1]}\norma{e^{\mu \abs{\cdot}^2}u(\cdot,t)},
\end{equation} 
with
\begin{equation*} a_{k+1}^{\eps}=\frac{(1+\eps)a_k}{1+\eps -a_k b_k}.
\end{equation*}
Thanks to \eqref{eq:defnak1eps}, we have
\begin{equation*}
  \sup_{t \in[-1,1]}\norma{e^{(a_{k+1}(t)-\eps)\abs{\cdot}^2}u(\cdot,t)}<
  +\infty, \quad \text{ for all }\eps>0.
\end{equation*} 
Using the previous estimate, we can conclude that \eqref{eq:iteractionrequest} holds  up to $j=k+1$,
thanks to Lemma \ref{lem:lemma4}.

\subsubsection{\underline{Application of the iteration scheme}}\label{subsec:mu14} 
Let us describe the first step of the iteration.
Choose $a_1(t)\equiv \mu$, for all $t \in
[-1,1]$: obviously \eqref{eq:iteractionrequest} hold. 
Set $b_1$ to be the solution to \eqref{eq:ODEb}, that is
\begin{equation*} b_1(t)=16 \mu (1-t^2), \quad t\in[-1,1].
\end{equation*} 
We need the following preliminary result, already proved in \cite{BFGRV}, which will be useful in the sequel.
\begin{lemma}[\cite{BFGRV}, Theorem 1.1]\label{lem:parziale}
  In the hypoteses of Theorem \ref{thm:main}, if $\alpha\beta\leq2$
  then $u\equiv 0$.
\end{lemma}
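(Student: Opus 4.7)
The plan is to carry the iteration scheme of Section~\ref{sec:conclusion} to its limit. After the Cr\"onstrom, Appell and time reductions, it suffices to prove the conclusion for a solution $u$ on $\Rn\times[-1,1]$ with $\sup_t\norma{e^{\mu\abs{x}^2}u(t)}<\infty$ and $\mu=1/(2\alpha\beta)$, together with the logarithmic convexity bound \eqref{eq:convessitalogaritmica}. The scheme generates even iterates $a_k$, starting from $a_1\equiv\mu$, $b_1(t)=16\mu(1-t^2)$, and $a_{k+1}=a_k/(1-a_k b_k)$ with $b_k$ defined by \eqref{eq:ODEb}. As already observed in the construction, if at some step one has $1-a_k(0)b_k(0)\leq 0$, then \eqref{eq:disuguaglianzachiave} integrated against $e^{-2\varepsilon b_k(t)\abs{\eta}^2}d\eta$ on the line $\R\xi$ forces $u\equiv 0$.

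The first substantive step is to identify the candidate fixed point. Setting $c_k:=a_k^{-1/2}$, the recurrence \eqref{eq:defnak1} reads $c_{k+1}^2=c_k^2-b_k$, so a limit $c_\infty$ must satisfy $b_\infty\equiv 0$, and hence by \eqref{eq:ODEb} solve $\ddot c_\infty=16\,c_\infty^{-3}$ on $[-1,1]$ with $c_\infty(\pm 1)=\mu^{-1/2}$. Multiplying by $\dot c_\infty$ and integrating yields the first integral $\dot c_\infty^2=16/c_{\min}^2-16/c_\infty^2$, where $c_{\min}:=c_\infty(0)$; differentiating $c_\infty^2$ twice then gives the constant value $(c_\infty^2)''\equiv 32/c_{\min}^2$, whence
\[
c_\infty^2(t)=c_{\min}^2+\frac{16\,t^2}{c_{\min}^2}.
\]
The boundary condition produces the quadratic $c_{\min}^4-c_{\min}^2/\mu+16=0$, which admits a positive solution if and only if $\mu\leq 1/8$, i.e.\ $\alpha\beta\geq 4$. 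Writing $c_{\min}^2=4R$ rewrites this equation as $1/(2\alpha\beta)=R/(4(1+R^2))$, exactly the defining equation for $R$ in Theorem~\ref{thm:main}, the smallest root being the one selected by the monotone scheme. Pulling $a_\infty(t)=R/(4(R^2+t^2))$ back through the time shift $[-1,1]\to[0,1]$ and the inverse Appell transform gives the explicit $a(t)$ displayed in \eqref{eq:weightedestimate}.

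Next I prove the dichotomy. When $\alpha\beta<4$, the algebraic obstruction above means no admissible $c_\infty$ exists; using the $k$-monotonicity of $a_k$ and an $L^\infty$ comparison argument on the ODE \eqref{eq:ODEb}, I conclude that the structural requirements \eqref{eq:iteractionrequest} cannot persist for all $k$, so some finite $k$ must trigger $1-a_k(0)b_k(0)\leq 0$ and hence $u\equiv 0$. When $\alpha\beta\geq 4$, I argue by induction the pointwise comparison $c_k\geq c_\infty$ (equivalently $a_k\leq a_\infty$), based on the signs $\dot a_k\leq 0$ and $\dot c_k\dot b_k\leq 0$ already used in the excerpt to check $F(a_{k+1})>0$, plus a maximum principle applied to the linear ODE for $c_{k+1}^2-c_\infty^2$. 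This control gives $1-a_k(0)b_k(0)>0$ uniformly in $k$ and monotone convergence $a_k\uparrow a_\infty$ locally uniformly on $(-1,1)$. The magnetic correction $M_B\int_t^1 s\,a_k(s)\,ds$ in the exponent of \eqref{eq:iteractionrequest} is bounded by $M_B\int_0^1 s\,a_\infty(s)\,ds$, keeping the constants uniform in $k$; Fatou applied to the weighted $L^2$-gradient bound of Lemma~\ref{lem:lemma4} then delivers both summands of \eqref{eq:weightedestimate}, after which undoing the time and Appell changes of variables restores the claimed form of $a(t)$ and the $\sqrt{t(1-t)}$ weight.

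The main obstacle is the induction $c_k\geq c_\infty$ together with the propagation of the full list of structural conditions in \eqref{eq:iteractionrequest}, and in particular the positivity $F(a_k)>0$ in the presence of the magnetic additive term $M_B$ coming from Lemma~\ref{lemma3}. Hypothesis \eqref{hypo:xi} is essential here, as it confines the magnetic contribution to a lower-order additive term in the inequality for $T_k$, leaving the leading nonlinearity of the scheme identical to the one treated in \cite{EKPV3}; this is what lets the convergence analysis proceed without altering the critical constant. Once $a_\infty$ is identified as above and the comparison is established, passage to the limit and inversion of the gauge, Appell, and time reductions are routine and complete the proof.
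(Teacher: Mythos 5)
The lemma you are asked to prove is intentionally the easiest outcome of the iteration, and the paper's argument is a one-liner: with $a_1\equiv\mu$ and $b_1(t)=16\mu(1-t^2)$ (the solution of \eqref{eq:ODEb} at the first step), one has $a_1(0)b_1(0)=16\mu^2$, which is $\geq 1$ precisely when $\mu=\frac1{2\alpha\beta}\geq\frac14$, i.e.\ $\alpha\beta\leq 2$. Then the inequality \eqref{eq:disuguaglianzachiave}, integrated against $e^{-2\eps b_1(t)|\eta|^2}\,d\eta$ over the line $\R\xi$, already forces $u\equiv 0$ at the very first step --- no limit $k\to\infty$, no fixed-point identification, no comparison argument is needed.

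Your proposal instead attempts to carry the full iteration to its fixed point $a_\infty$, identify $c_\infty$ as the solution to $\ddot c_\infty=16\,c_\infty^{-3}$, and deduce the conclusion from the non-existence of an admissible $c_\infty$ when $\alpha\beta<4$. This is a much larger claim (it is essentially the full Theorem~\ref{thm:main}), and as written it leaves the key steps (the comparison $c_k\geq c_\infty$, the forcing argument when no $c_\infty$ exists) unproved. More seriously, the route is \emph{circular}: the paper's Case~2 in the proof of Theorem~\ref{thm:main} --- the case $\lim_k a_k(0)=+\infty$ with $\int_0^1 s\,a(s)\,ds=+\infty$ and $s_k$ not accumulating at $0$ --- invokes Lemma~\ref{lem:parziale} itself to conclude $u\equiv 0$ after showing $u(0)$ has compact support. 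You therefore cannot prove Lemma~\ref{lem:parziale} by first establishing the full dichotomy of the iteration scheme, because that dichotomy already depends on the lemma. The fix is to replace the entire argument with the direct one-step observation above.
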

\begin{proof}
The condition $\alpha\beta\leq2$, namely $\mu\geq\frac14$ by \eqref{nualfabeta},
is equivalent to $1-a_1(0)b_1(0)\leq0$. Then $u\equiv0$ by the above arguments based on \eqref{eq:disuguaglianzachiave}, and the proof is complete.
\end{proof}
By means of the previous Lemma, we only need to consider the range $\alpha\beta>2$, i.e. $\mu<\frac14$.

Apply the above described iteration procedure. 
If there exists $k\in\N$ such that $1-a_k(0)b_k(0)\leq0$, then $u\equiv0$ and the procedure stops.
 If for all $k \geq 1$ we have
$1-a_k(0)b_k(0)>0$, the above described iteration produces an increasing sequence
$(a_k)_{k\geq 1}$ of functions verifying \eqref{eq:iteractionrequest}.
Set
\begin{equation*} a(t):=\lim_{k}a_{k}(t),\quad t\in[-1,1].
\end{equation*}
We now need to distinguish two cases.

{\bf Case 1: $\mathbf{\lim_k a_k(0)<+\infty}$.} In this case, from \eqref{eq:iteractionrequest} we have
\begin{equation*}
    \sup_{t \in[-1,1]}\norma{e^{a(t)\abs{\cdot}^2}u(\cdot,t)}\leq e^{M_B
      \int_0^1 s a(s)\,ds} e^{M_V}\sup_{t \in[-1,1]}\norma{e^{\mu\abs{\cdot}^2}u(\cdot,t)} .
\end{equation*}
\begin{equation*}
\begin{split}
  &\norma{\sqrt{1-t^2}\nabla_A(e^{(a+\frac{i \dot a}{8 a})\abs{x}^2}u)}_{L^2(\Rn\times[-1,1])}
  + C_a \norma{\sqrt{1-t^2}e^{(a(t)-\eps)\abs{x}^2}\nabla_A u}_{L^2(\Rn\times [-1,1])} \\
  &\leq C \sup_{t\in [-1,1]}\norma{e^{\mu \abs{x}^2}u(t)},
\end{split}
\end{equation*}
for some $C=C(\norma{V}_{\infty},\norma{x^t B}_{\infty})>0$.

Moreover, $a$ can be determined as the solution to a suitable ordinary differential equation (see \cite{EKPV3} for details). One has 
\begin{equation*} a(t)=\frac{R}{4(1+R^2 t^2)},
\end{equation*} 
where $R>0$ is such that
\begin{equation*} \mu=\frac{R}{4(1+R^2)}.
\end{equation*} 
This forces $\mu\leq \frac{1}{8}$.
Estimate \eqref{eq:weightedestimate} hence immediately follows after inverting the changes in Section \ref{sec:Appell}.

{\bf Case 2: $\mathbf{\lim_k a_k(0)=+\infty}$.} In this case, if $\int_0^1 s a(s)\,ds < +\infty$,
then \eqref{eq:iteractionrequest} forces $u\equiv 0$.  If otherwise $\int_0^1 s a(s)\,ds = +\infty$, we need a more
detailed analysis. For all $k\geq 1$, let $s_k$ be the maximum point
of $s a_k(s)$ in $[0,1]$: from \eqref{eq:iteractionrequest} we have
\begin{equation*}
  \begin{split} 
    &\infty > e^{2\norma{V}_{L^\infty} +
      \frac{\norma{V}^2}{4}}\sup_{t \in [-1,1]}\norma{e^{\mu\abs{\cdot}^2}u(\cdot,t)}
    \geq \int_{\Rn} e^{2 a_k(0)\abs{x}^2 -M_B\int_0^1 s
      a_k(s)\,ds}\abs{u(0)}^2\,dx \\ 
    &\geq \int_{\Rn} e^{2 a_k(0)\abs{x}^2 -M_B s_k a_k(s_k)\,ds}\abs{u(0)}^2\,dx \geq 
    \int_{\Rn} e^{2 a_k(0)\left(\abs{x}^2-M_B \frac{s_k}{2}\right)}\abs{u(0)}^2\,dx.
  \end{split}
\end{equation*} 
If there exists a subsequence $(s_{k_{h}})_{h}$ such
that $s_{k_{h}}\to 0$, then the previous inequality implies that $u(0)\equiv0$ in $\R^n$, i.e. $u\equiv
0$. If no subsequences of $s_k$ accumulate in $0$, 
take $\bar s
>0$ a limit point of $(s_k)_k$: the previous inequality implies that
$u(0)\equiv0$ in the complementary of the ball centered in the origin of
radius $(M_B\bar s)/2$. As a consequence, by \eqref{eq:decay3hardy}, one can take $\beta>0$ arbitrarily small: then, by
Lemma \ref{lem:parziale}, we conclude that $u\equiv0$ in this case.

In conclusion, we summarize the above argument as follows: if $\mu>\frac18$, then necessarily we are either in the case 2 or in the case in which the scheme stops in a finite number of steps. In both cases, we proved that $u\equiv0$;
if $\mu\leq\frac18$, one can prove the logarithmic convexity estimates in \eqref{eq:weightedestimate}, by the arguments described in the case 1 above and the inversion of the changes of variables of Section \ref{sec:Appell}, for which we omit further details.

\addcontentsline{toc}{section}{\refname}

\end{document}